\newcommand{\cc}{\mathbb C}
\newcommand{\ff}{\mathbb F}
\newcommand{\zz}{\mathbb Z}
\newcommand{\qq}{\mathbb Q}
\newcommand{\la}{\langle}
\newcommand{\ra}{\rangle}
\newcommand{\lra}{\longrightarrow}
\newcommand{\hra}{\hookrightarrow}
\newcommand{\ep}{\epsilon}
\newcommand{\lam}{\lambda}
\DeclareMathOperator{\End}{End}
\DeclareMathOperator{\Ext}{Ext}
\DeclareMathOperator{\Hom}{Hom}
\DeclareMathOperator{\Mod}{Mod}
\DeclareMathOperator{\supp}{supp}
\DeclareMathOperator{\Perv}{Perv}
\newcommand{\colim@}[2]{%
  \vtop{\m@th\ialign{##\cr
    \hfil$#1\operator@font colim$\hfil\cr
    \noalign{\nointerlineskip\kern1.5\ex@}#2\cr
    \noalign{\nointerlineskip\kern-\ex@}\cr}}%
}
\newcommand{\colim}{%
  \mathop{\mathpalette\colim@{\rightarrowfill@\textstyle}}\nmlimits@
}
\newcommand{\ft}{\mathfrak t}
\newcommand{\calGr}{\mathcal{G}r}
\newcommand{\cE}{\mathcal E}
\newcommand{\cF}{\mathcal F}
\newcommand{\cC}{\mathcal C}
\newcommand{\cB}{\mathcal B}
\newcommand{\cA}{\mathcal A}
\newcommand{\cP}{\mathcal P}
\newcommand{\cI}{\mathcal I}
\def\Ddots{\mathinner{\mkern1mu\raise\p@
\vbox{\kern7\p@\hbox{.}}\mkern2mu
\raise4\p@\hbox{.}\mkern2mu\raise7\p@\hbox{.}\mkern1mu}}
\newcommand{\vp}{\varpi}
\newcommand{\T}{\mathcal{T}}
\newcommand{\F}{\mathcal{F}}
\newcommand{\G}{\mathcal{G}}
\newcommand{\bF}{\mathbb{F}}
\newcommand{\bO}{\mathbb{O}}
\newcommand{\bT}{\mathbb{T}}
\newcommand{\Perf}{\mathrm{Perf}}
\newcommand{\kvp}{\mathbb{F}[\varpi]}
\newcommand{\zvp}{\mathbb{O}[\varpi]}
\newcommand{\Psm}{\mathbf{Psm}}
\newcommand{\liee}{T^0}
\newcommand{\liet}{\mathfrak{t}}
\newcommand{\lieo}{T^1}
\newcommand{\modc}{\text{-}\mathrm{Mod}_c}
\newcommand{\moddg}{\text{-}\mathrm{Mod}_{dg,c}}
 \theoremstyle{plain}
  \newtheorem{Thm}[subsubsection]{Theorem}
  \newtheorem{Lem}[subsubsection]{Lemma}
  \newtheorem{Prop}[subsubsection]{Proposition}
  \newtheorem{Cor}[subsubsection]{Corollary}
 \theoremstyle{definition}
  \newtheorem{Def}[subsubsection]{Definition}
  \newtheorem{Rem}[subsubsection]{Remark}
  \newtheorem{Cond}[subsubsection]{Condition}
\title[Parity sheaves and Smith theory]{Parity sheaves and Smith theory}
\author{Spencer Leslie}
\author{Gus Lonergan}
\theoremstyle{plain}
\theoremstyle{definition}
\begin{document}

\maketitle

\begin{abstract}
Let $p$ be a prime number and let $X$ be a complex algebraic variety with an action of $\mathbb{Z}/p\mathbb{Z}$. We develop the theory of parity complexes in a certain $2$-periodic localization of the equivariant constructible derived category $D^b_{\mathbb{Z}/p\mathbb{Z}}(X,\mathbb{Z}_p)$. Under certain assumptions, we use this to define a functor from the category of parity sheaves on $X$ to the category of parity sheaves on the fixed-point locus $X^{\mathbb{Z}/p\mathbb{Z}}$. This may be thought of as a categorification of Smith theory. When $X$ is the affine Grassmannian associated to some complex reductive group, our functor gives a geometric construction  of the Frobenius-contraction functor recently defined by M. Gros and M. Kaneda via the geometric Satake equivalence.
\end{abstract}

\setcounter{tocdepth}{1}
\tableofcontents

\section{Introduction}\label{sec:intro}

Let $\bF$ be a field of characteristic $p>2$, and let $X$ be a complex algebraic variety equipped with an action of the cyclic group $\varpi$ of order $p$. Broadly speaking, when working with $\bF$-coefficients, homological data of $X$ should be related to homological data of the fixed point locus $X^{\varpi}$. This idea goes back at least to P. Smith's work \cite{Sm}, who related the vanishing of $H^*(X,\bF)$ to that of $H^*(X^{\varpi},\bF)$. One may ask whether a similar relationship exists between more modern (e.g. categorical) data associated to $X$. The main aim of this paper is to relate the category of \emph{parity sheaves} on $X$ with that on $X^{\varpi}$.

Let $\bO=W(\ff)$ be the ring of Witt vectors for the field $\ff$; for example, if $\ff=\ff_p$ then $\bO=\zz_p$. In the most general setting, we construct a functor $LL$ (see \ref{liftingsection}) of the form
\[
\left\{\parbox{15em}{\centering$\vp$-equivariant parity sheaves on $X$\\ with coefficients in $\bO$}\right\}\xrightarrow{LL} \bigg\{\parbox{16em}{\centering parity sheaves on the fixed points $X^\vp$\\ with coefficients in $\ff$.}\bigg\};
\]permit us to postpone the precise descriptions of the categories of parity sheaves until (\ref{lets get technical}) as they are rather technical in this generality. 

The construction of $LL$ rests on our development of a theory of parity sheaves with coefficients in a certain $\mathbb{E}_\infty$-ring spectrum $\T_\infty$, though one does not need to understand the theory of $\mathbb{E}_\infty$-rings to describe our results. In fact, our approach is to study these sheaves as objects in a certain localization of a derived category. This is an integral version of D. Treumann's ``Smith theory for sheaves'' (\cite{Tr}). We show that there is a good notion of parity object in this setting which interpolates between parity sheaves on $X$ and $X^\vp.$ A key innovation is our construction of functors between \emph{classical} categories of parity sheaves using this Smith theory; see the discussion below Theorem \ref{Thm: Introduction} below.

While our approach is topological in nature, our primary motivation is applications to geometric representation theory, where parity sheaves play an important role in the modular setting (e.g. \cite{JMW1}, \cite{JMW2}, \cite{Williamsonexpos}, \cite{MR3481352}, and the references therein). Our main example is the affine Grassmannian
\[
\calGr_G=G(\cc((t))/G(\cc[[t]])
\]of a complex reductive algebraic group $G$ with its spherical stratification; here we use the standard convention and consider $\calGr_G$ with its analytic topology. This space may be equipped with the so-call \emph{loop-rotation action} of $\vp$, generated by the shift of the uniformizer
\[
t\longmapsto \zeta_pt
\]
for a primitive $p$-th root of unity $\zeta_p\in \cc^\times$. One of the components of the fixed points $\calGr_G^\vp$ is naturally isomorphic to $\calGr_G$ (namely, elements of $G(\cc((t^p))/G(\cc[[t^p]])$), and we restrict our attention to this component. Specializing our constructions to this setting, we obtain a more refined functor
\begin{align*}
\underline{\Psm}:\mathrm{Par}^0_{sph}(\calGr_G;\ff)\to \mathrm{Par}^0_{sph}(\calGr_G;\ff),
\end{align*}
where $\mathrm{Par}^0_{sph}(\calGr_G;\ff)$ is the category of spherical perverse parity complexes over $\ff$. For $p$ good for $G$ \cite{MR}, this category is equivalent under the geometric Satake equivalence to the category of tilting modules for the Langlands dual algebraic group $G^\vee_{\ff}$ over $\ff$. The following theorem is our main application.
\begin{Thm}\label{Thm: intro motivation}
There is an extension of $\underline{\Psm}$ to the full category of spherical perverse sheaves
\begin{align*}
\underline{\Psm}:\mathrm{Perv}_{sph}(\calGr_G;\ff)\to \mathrm{Perv}_{sph}(\calGr_G;\ff)
\end{align*}
such that, under the geometric Satake equivalence, the resulting functor
\begin{align*}
\mathrm{Rep}(G^\vee_{\ff})\to \mathrm{Rep}(G^\vee_{\ff})
\end{align*}
gives the Frobenius contraction functor of \cite{GK0}.
\end{Thm}
See Section \ref{Section: geometric application} for a more complete discussion on the geometric Satake equivalence and the Frobenius contraction functor. In particular, Smith theory gives a geometric construction of the Frobenius contraction functor. We remark that recently Riche and Williamson have further studied the Smith theory of the loop rotation in connection with the other components of the fixed-points $\calGr_G$, with applications to the linkage principle \cite{RicheWilliamsonSmith}.

The rest of the introduction outlines our approach.

\subsection{Smith theory for sheaves} Let $\ff[\vp]$ denote the group algebra and let $D^b_c(\bF[\varpi])$ denote the (cohomologically bounded, with finitely generated cohomology groups) derived category of the category of $\ff[\vp]$-modules. Write $\mathrm{Perf}_c(\bF[\varpi])$ for the thick subcategory spanned by bounded complexes of free $\kvp$-modules. The \emph{Tate category} is defined to be the Verdier quotient

\[
\T_0\modc :=D^b_c(\kvp)/\mathrm{Perf}_c(\kvp).
\]
As pointed out in \cite[Section 4.1]{Tr}, this localization (denoted as $\Perf(\T)$ in \emph{loc. cit.}) is equivalent to the homotopy category of the category of finitely-generated module spectra over a certain $\mathbb{E}_\infty$-ring spectrum $\T_0$. This point of view is not technically necessary for this paper, but is philosophically useful and hopefully justifies our notation.

Treumann also defines relative Tate categories of sheaves valued in $\T_0\modc$. More precisely, given an algebraic variety $Y$ with trivial $\vp$-action, one may similarly consider the relative Tate category given as the quotient
 \[
\mathrm{Sh}_c(Y;\T_0) :=D^b_c(Y;\kvp)/\mathrm{Perf}_c(Y;\kvp)
\]
of the (cohomologically-bounded, algebraically constructible) equivariant derived category 
\[
D^b_{\vp,c}(Y;\bF)\cong D^b_c(Y;\kvp)
\]
by the full subcategory generated by sheaves whose stalks vanish in $\T_0\modc$. Again, while not strictly necessary, it is fruitful to view (as in \cite{Tr}) this localization as a subcategory of the homotopy category of sheaves of module spectra over $\T_0$, which motivates our notation.  These quotients are not themselves derived categories; for instance the double suspension $[2]$ is isomorphic to the identity, giving a $2$-periodic structure to $\mathrm{Sh}_c(Y;\T_0)$.

For a $\vp$-variety $X$ with fixed-point subvariety $Y=X^\vp$, one obtains the sheaf-theoretic \emph{Smith functor}
\[
\Psm : D^b_{\vp,c}(X;\bF) \lra \mathrm{Sh}_c(X^\vp; \T_0)
\]
by $*$-restricting to $X^\vp$ and then projecting. This operation should be regarded as the $\vp=\mu_p$ analogue of hyperbolic localization (see e.g. \cite{Br}), relating mod $p$ topological data of $X$ to that of $X^\vp$. In particular, Treumann conjectures that $\Psm$ is well behaved when restricted to the subcategory of perverse sheaves on $X$ (see \cite[Conjecture 4.1]{Tr}, which the author establishes in several special cases).

Let us fix a $\vp$-equivariant stratification $S$ of $X$, which induces in the natural manner a stratification also denoted by $S$ of $X^\vp$. It is shown in \cite{Tr} that the Smith functor $\Psm$ preserves constructibility with respect to $S$ and commutes with all the standard operations on sheaves. 

\begin{Rem}
For simplicity, we will assume throughout that the stratification on $X^\vp$ consists only of \textbf{simply-connected strata}. This is not strictly necessary for much of the theory, but our results are most complete in this setting. In any case, this assumption is satisfied for many cases of interest to geometric representation theory.
\end{Rem}

We observe that the $2$-periodicity $[2]\cong Id$ of $\mathrm{Sh}_S(X^\vp;\T_0)$ makes it possible to define a notion of \emph{parity complexes} in analogy to the theory developed in \cite{JMW1}; we may then ask if $\Psm$ takes parity objects to parity objects. In cases when parity sheaves on $X$ and $X^\vp$ are known to be perverse, we can hope to use this to relate perverse sheaves on $X$ to those on $X^\vp$ in accordance with \cite[Conjecture 4.1]{Tr}.

Indeed, one may define a $\mathbb{Z}/2\zz$-graded cohomology functor on $\mathrm{Sh}_S(X^\vp; \T_0)$, and define \emph{Tate-parity complexes} in terms of this functor. However, this definition has the undesirable property that the projection to the Tate category of indecomposible parity sheaves on $X^\vp$ need not be indecomposable; in fact, even over a point, the projection of the module $\ff$ will be the sum of a non-zero even and non-zero odd part (see Remark \ref{Rem: modular coefficients}). This is the Bockstein phenomenon, and our remedy is to replace $\bF$ by its Witt ring $\bO$.

This leads us to extend the construction of \cite{Tr} to sheaves of $\bO$-modules. The resulting theory retains all of the desirable features of the modular variant; in particular, we have a localized category $\mathrm{Sh}_S(X^\vp;\T_\infty)$ 
and an integral Smith functor 
\[
\Psm : D^b_{\vp,S}(X;\bO) \lra \mathrm{Sh}_S(X^\vp; \T_\infty).
\]
Importantly, this integral setting has a better-behaved theory of parity objects. Even the structure of the coefficient category 
\[
\T_\infty\modc:= D^b_c(\bO[\varpi])/\mathrm{Perf}_c(\bO[\varpi]),
\]
where $\mathrm{Perf}_c(\bO[\varpi])$ is the thick subcategory of \emph{weakly-injective} modules (see Section \ref{sec:const}), is more transparent: Corollary \ref{Prop: Grothendieck} states it is equivalent to $\zz/2\zz$-graded finite dimensional $\ff$-vector spaces (see Remark \ref{Rem: why not two}). Correspondingly, we obtain a well-behaved relative theory of parity objects as well. More specifically, in Section \ref{Section: sheaves} we construct a functor of $\zz/2\zz$-graded cohomology sheaves of $\ff$-vector spaces on $\mathrm{Sh}_S(X^\vp;\T_\infty)$ and define a notion of \emph{Tate-parity complex} in $\mathrm{Sh}_S(X^\vp;\T_\infty)$ in terms of this functor (see Definition \ref{Def: Tate parity}). The resulting theory enjoys many of the properties of the classical theory of parity sheaves, including a complete classification of the indecomposible objects.

Before stating our first main result, let us fix a little more notation: consider the counit
\begin{align*}
\ep: \bO[\vp]&\lra\bO\\
        g&\longmapsto 1,
\end{align*}
where $g$ denotes a generator of $\vp$; this induces a functor
\begin{align*}
 \epsilon^\ast:D^b_S(X^\vp;\bO)\to D^b_S(X^\vp;\bO[\vp]),
\end{align*}
defined by endowing $\bO$-modules with the trivial $\vp$-action. We also set
\begin{align*}
\bT^*:D^b_S(X^\vp;\bO[\vp])\lra \mathrm{Sh}_S(X^\vp;\T_\infty)
\end{align*}
for the defining projection functor; set $\bT:=\bT^* \epsilon^\ast.$ We call this the \emph{Tate extension of scalars} functor.
Our first main theorem is the following classification result (see Sections \ref{Section: tateparitysheaves} and \ref{Section: JWM redux}):

\begin{Thm}\label{Thm: good theory intro} Let $S$ be a stratification of $X^\vp$ satisfying the JMW condition (\ref{parcon}) and consisting of simply-connected strata. Fix a pariversity $\dagger:S\lra \zz/2\zz$ on $X^\vp$. 
\begin{enumerate}\item\label{gus1} The functor
\begin{align*}
\bT:D^b_S(X^\vp;\bO)\lra \mathrm{Sh}_S(X^\vp; \T_\infty)
\end{align*}
sends $\dagger$-parity complexes to $\dagger$-Tate-parity complexes.
\item\label{gus2} Up to shifting by $[1]$, there is at most one indecomposable $\dagger$-Tate-parity complex in $\mathrm{Sh}_S(X^\vp;\T_\infty)$ supported on the closure of a given stratum.
\item\label{gus3} Let $\mathcal{E}$ be an indecomposable $\dagger$-parity complex in $D^b_S(X^\vp;\bO)$ whose negative self
-extensions all vanish (this is satisfied if $\mathcal{E}$ is perverse, for example). Then $\bT\mathcal{E}$ is indecomposable.
\end{enumerate}
\end{Thm}
Following \cite{JMW1}, we call a parity complex (resp., Tate-parity complex) a \emph{parity sheaf} (resp., a \emph{Tate-parity sheaf}) if it is indecomposable and its restriction to any stratum which is dense in its support is supported in the appropriate cohomological degree. The theorem implies that if $D^b_S(X^\vp;\bO)$ has `enough' parity sheaves, $\bT$ induces a bijection
\[
\bT:\left\{\text{parity sheaves on $X^\vp$}\right\}\lra \left\{\text{Tate-parity sheaves on $X^\vp$}\right\}.
\] 
\begin{Rem}
The simple-connectedness assumption only enters in \ref{gus3} and is probably unnecessary. Nevertheless, our proof relies on it.
\end{Rem}
This result mirrors the structure theory of parity sheaves closely.  The arguments rely centrally on Theorem \ref{megathm}, which classifies Tate-parity sheaves in the smooth case (say, on a stratum $Y_\lam$). With this in hand, we show in Proposition \ref{prop: krull} that Tate-parity complexes decompose uniquely into sums of indecomposible objects, at which point the proof of Theorem \ref{Thm: good theory intro} is remarkably similar to the arguments in \cite[Section 2]{JMW1}. The proofs of Section \ref{Section: Tate-parity} rely on our structural results about the category $\mathrm{Sh}_S(X^\vp;\T_\infty)$ and certain technical tools (e.g. the {Tate hypercohomology spectral sequence}; see \ref{hyperco}) introduced in Section \ref{Section: sheaves}.

\subsection{Smith theory and parity sheaves}\label{lets get technical} With a theory of Tate-parity sheaves in hand, the Smith functor
\begin{align*}
\Psm:D^b_{\vp,S}(X;\bO)\lra \mathrm{Sh}_S(X^{\vp};\T_\infty)
\end{align*}
may be used to relate parity sheaves on $X$ to those on $X^\vp$, as we now explain. First, under certain additional hypotheses we show that $\Psm$ 
sends $\vp$-equivariant parity complexes on $X$ to Tate-parity complexes on $X^\vp$. Here, a $\vp$-equivariant parity complex simply means an object of $D^b_{\vp,S}(X;\bO)$ whose underlying complex in $D^b_S(X;\bO)$ is parity. A special case of our second main result (stated in Theorem \ref{alloallo}; see Lemma \ref{s1lem}) is the following theorem:

\begin{Thm}\label{Thm: Introduction}
Suppose the $\vp$-action on $X$ extends to an action of $S^1$ such that every connected component of every stratum of $X^\vp$ contains an $S^1$-fixed point. For any  parity complex $\cE$ in $D_{\vp,S}^b(X;\bO)$ that comes from an  $S^1$-equivariant sheaf, $\Psm(\cE)$ is Tate-parity.
\end{Thm}
In particular, we obtain a second functor valued in the the additive category of Tate-parity complexes. That is, supposing the assumptions of Theorems \ref{Thm: good theory intro} and \ref{Thm: Introduction} hold (for example, the loop rotation on $X=\calGr_G$), these theorems imply that for a parity complex $\cE$ in $D^b_{\vp,S}(X;\bO)$, there is a unique parity complex $\cF$ in $D^b_{S}(X^\vp;\bO)$ satisfying
\begin{equation}\label{corresp}
\Psm(\cE)\cong \bT\cF.
\end{equation}
 We show in Section \ref{liftingsection} that this correspondence gives rise to a \emph{functor}
\begin{align*}
    LL:\mathrm{Par}^n_{\vp,S}(X;\bO)&\lra \mathrm{Par}_{S}^0(X^\vp;\bF),\\
    \mathcal{E}\qquad&\longmapsto \quad \ff\F
\end{align*}
where the superscripts indicate restricting to certain subcategories (related to e.g. codimensions of stata; see Section \ref{liftingsection} for full details). Remarkably, this functor between categories of (classical) parity complexes is constructed by passing through the integral Tate category.  We view this as a sort of refined $\vp$-localization functor, defined only on categories of parity sheaves. 

The existence of this functor relies on Theorem \ref{lifting}, showing that the modular reduction functor 
\[
\ff(-):D_c^b(X^\vp;\bO)\lra D_c^b(X^\vp;\bF).
\]
factors through the Tate extension of scalars functor $\bT$. We prove this by exploiting an additional graded structure on 
\[
\Hom_{\mathrm{Sh}_c(X^\vp;\T_\infty)}(\bT\F,\bT\G),
\]
the degree zero factor of which is naturally isomorphic to $\Hom_{D^b_c(X^\vp;\ff)}(\ff\F,\ff\G),$; see Section \ref{addendum}.
\begin{Rem}
One of our initial motivations was to establish examples of Treumann's perversity conjecture \cite[Conjecture 4.1]{Tr}, which our results accomplish for perverse parity sheaves. As we see below, for more general applications an important point is that functors on perverse sheaves are often determined by their action on perverse parity sheaves.
\end{Rem}

\subsection{Application to geometric representation theory} Now fix a connected reductive group $G$ over $\cc$, and consider $X=\calGr_G$ equipped with the loop rotation. Assuming $p$ is good for $G$, we apply the above results to prove Theorem \ref{Thm: intro motivation} in Section \ref{Section: geometric application}. 

Two aspects of this proof warrant further comment. First, in this case we show that the localization functor $LL$ factors further\footnote{More precisely, the composition of $LL$ with certain equivalences factors in this way; see Section \ref{Section: Tate perverse}.}
\[
\begin{tikzcd}
\mathrm{Par}^0_{sph}(\calGr_G;\bO)\ar[d,"\ff"]\ar[rd,"LL"]&\\
\mathrm{Par}^0_{sph}(\calGr_G;\ff)\ar[r,"\underline{\Psm}"]&\mathrm{Par}^0_{sph}(\calGr_G;\ff),
\end{tikzcd}
\]
where $\ff$ is the  modular reduction functor. The resulting functor $\underline{\Psm}$ is the appropriate functor to consider in the context of the geometric Satake equivalence. Second, by combining Treumann's modular Smith functor and a functor we refer to as \emph{Tate-perverse cohomology} we construct a functor
\[
\Perv_{sph}(\calGr_G;\ff)\lra \Perv_{sph}(\calGr_G;\ff)
\]
in Section \ref{Section: Tate perverse}. It is this functor that appears in the statement of Theorem \ref{Thm: intro motivation}, though it is difficult to say much about it directly.

Crucially, Proposition \ref{Prop: smith extension} shows that the restriction of this functor to perverse parity sheaves is precisely $\underline{\Psm}$. Using this result, we first establish Theorem \ref{Thm: intro motivation} directly on the subcategory $\mathrm{Par}^0_{sph}(\calGr_G;\ff)$ and then show this is enough to conclude the full result via an abstract argument.

\subsection{Outline}
After we fix notation and conventions in Section \ref{Section: prelim}, we study the Tate coefficient category in Section \ref{sec:const}. Turning to the relative theory, Section \ref{Section: sheaves} develops our theory of sheaves valued in $\T_\infty$. In particular, we introduce our notion of Tate cohomology sheaves and establish several structural results about $\mathrm{Sh}_c(Y;\T_\infty).$ In Section \ref{Section: Tate-parity}, we recall the notion of parity sheaves from \cite{JMW1} and develop the theory of Tate-parity sheaves. Section \ref{Section: Smith} establishes our main results regarding the interaction between parity sheaves and the Smith functor. Finally, we review the geometric Satake equivalence and the  Frobenius contraction functor and prove Theorem \ref{Thm: intro motivation} in Section \ref{Section: geometric application}.
\subsection{Acknowledgements}
{ We wish to thank Boston College and the Massachusetts Institute of Technology where the authors carried out the bulk of this work as graduate students. We wish to thank Daniel Juteau, Pramod Achar, David Treumann, and Geordie Williamson for helpful discussions on topics related to this work. S.L. thanks David Treumann for conversations leading to this project and for providing travel funding while a graduate student. He also wishes to thank his advisor Solomon Friedberg for all his help and support. G.L. wishes particularly to thank Pramod Achar for bringing the work of David Treumann to his attention, and his advisor Roman Bezrukavnikov for all his support and encouragement. Finally, we thank the anonymous referee for several suggestions and corrections which dramatically improved the paper.}

\section{Preliminaries}\label{Section: prelim}
\subsection{Notation}
Let $p$ be an odd prime and let $\bF$ denote an algebraic extension of the finite field $\ff_p,$ and let $\bO$ denote the ring of Witt vectors. For example, if $\bF=\bF_{p^n}$ then $\bO=\zz_{p^n}$ is the ring of integers of the unique unramified degree $n$ extension $\qq_{p^n}$ of $\qq_p.$

Now let $\vp$ denote a cyclic group of order $p$. We fix a generator $g\in \vp$ and set 
\[
N=1+g+\cdots g^{p-1}
\]
for the norm element of the group ring $\bO[\vp]$.

\subsection{Conventions on sheaves}
 Let $X$ be a complex algebraic variety endowed with the analytic topology. Throughout the paper, a \emph{stratification} of $X$ will mean an algebraic Whitney stratification. It is well known that every algebraic but not necessarily Whitney stratification can be refined to an algebraic Whitney stratification. All of our arguments work equally well with a fixed algebraic Whitney stratification as without any fixed choice. 
 
 For any category whose objects are complexes of sheaves on $X$ (or $X^\vp$) with constructible cohomology sheaves, we will replace the subscript $c$ by the subscript $S$ to indicate the full subcategory spanned by the complexes with $S$-constructible cohomology sheaves.

 
 Let $D_c^b(X;R)$ denote the cohomologically-bounded constructible derived category of sheaves on $X$ with coefficients in $R$-modules for some commutative ring $R$. Here, as usual, constructible means that the cohomology sheaves are constructible sheaves with respect to some algebraic Whittney stratification of $X$ and that they have finitely generated stalks. For such a sheaf 
 \[
 \F=(\cdots \to \F_{-1}\to \F_0\to \F_1\to \cdots),
 \]
 we let $H^n(\F)$ denote the $n^{th}$ cohomology sheaf. For a morphism $f:X\to X'$ of varieties, we have the usual functors
 \[
 f_\ast,\: f_!,\:, f^\ast,\:f^! 
 \]
 along with $\otimes^!$ and the internal Hom-functor $\underline{\Hom}(-,-).$ We also denote by $\mathbb{D}$ the Verdier duality functor on $D_c^b(X;R)$.
 
 For any $R$-module $M$, we denote by $\underline{M}=\pi^\ast M$ the constant sheaf on $X$ associated to $M$ placed in homological degree zero, where $\pi:X\lra \{pt\}$ is the unique map to a point.
 
\subsection{Equivariant derived category} Now assume that $X$ is equipped with an algebraic action of $\vp$ and let $D^b_{\vp,c}(X;\bO)$ denote the bounded constructible $\vp$-equivariant derived category of sheaves (for the analytic topology) of $\bO$-modules on $X$ in the sense of \cite{BernsteinLunts}.
 
Let $X^\vp$ denote the fixed-point subvariety. For much of Sections \ref{Section: sheaves} and \ref{Section: Tate-parity}, we need to assume that the $\vp$-action is trivial. We will denote the variety by $Y$ in this case, the main example being $Y=X^\vp$.  In this case, we have an equivalence 
\begin{align}\label{eqn: equivar coeff}
D^b_{\vp,c}(Y;\bO) \cong D^b_c(Y;\bO[\vp]).
\end{align}

\section{The Tate category}\label{sec:const}
In this section, we construct the integral Tate category by following the construction of \cite{Tr}, replacing $\bF$ by $\bO$. We then study \emph{parity objects} in this category, which is essentially the theory of Tate-parity sheaves over a point. To begin, we recall the notion of weakly-injective modules.
\begin{Def}
A $\bO[\vp]$-module $M$ is \emph{weakly injective} if there is a $\bO$-module $V$ such that $M\cong\bO[\vp]\otimes_{\bO} V$.
\end{Def}
Stated differently, $M$ is weakly-injective if it is injective relative to the trivial subgroup of $\vp$; see \cite[Section 2]{BIK} for a more general discussion. We note that weakly-injective modules are acyclic with respect to the functor of $\vp$-invariants. These modules are the appropriate analogues of free $\ff[\vp]$-modules arising in \cite{Tr}.


Now let $D^b(\bO[\varpi])$ be the bounded derived category of $\bO[\varpi]$-modules, and let $D^b_c(\bO[\vp])$ denote its full subcategory consisting of complexes with finitely generated cohomology modules. Set $\mathrm{Perf}_c(\bO[\varpi])$ to be the thick subcategory generated by finitely generated {weakly injective} $\bO[\varpi]$-modules. Note that $\mathrm{Perf}_c(\bO[\varpi])$ forms a tensor ideal.

\begin{Def} The \emph{integral Tate category} is the Verdier quotient 
\begin{align*}
\T_\infty\modc:= D^b_c(\bO[\varpi])/\mathrm{Perf}_c(\bO[\varpi]).
\end{align*}
\end{Def}
It is naturally a triangulated category, the distinguished triangles being those which are isomorphic to the image of a distinguished triangle in $D^b_c(\bO[\vp])$. We have the triangulated projection functor
\begin{align*}
\bT^*:D^b_c(\bO[\vp])\lra\T_\infty\modc.
\end{align*}

\subsection{Periodicity} Below we will introduce a natural $\zz/2\zz$-graded cohomology functor on the category $\T_\infty\modc$ referred to as \emph{Tate cohomology}. This gives an equivalence between $\T_\infty\modc$ and the category of $\zz/2\zz$-graded $\ff$-vector spaces (see Corollary \ref{Prop: Grothendieck} below). 

We begin by showing that $\T_\infty\modc$ is $2$-periodic.

\begin{Prop}\label{Prop: two periodic}
There is a natural isomorphism $[2]\cong[0]$ of functors on $\T_\infty\modc$.
\end{Prop}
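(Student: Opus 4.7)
The periodicity is driven by the classical $2$-periodic complete resolution of the trivial module for a cyclic $p$-group. Splicing the two short exact sequences of $\zp[\varpi]$-modules
$$0 \to \zp \xrightarrow{1\mapsto N} \zp[\varpi] \xrightarrow{g-1} I \to 0, \qquad 0 \to I \hookrightarrow \zp[\varpi] \xrightarrow{\mathrm{aug}} \zp \to 0,$$
where $N = 1 + g + \cdots + g^{p-1}$ and $I$ is the augmentation ideal, produces the four-term exact sequence
$$0 \to \zp \to \zp[\varpi] \xrightarrow{g-1} \zp[\varpi] \to \zp \to 0.$$
This Yoneda $2$-extension represents a canonical element $\theta \in \Ext^2_{\zp[\varpi]}(\zp,\zp) = \Hom_{D^b_c(\zp[\varpi])}(\zp, \zp[2])$, and it will furnish the desired natural isomorphism.

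The first step is to verify that $\bT^*(\theta)$ is an isomorphism in $\T_\infty-mod_c$. Standard truncation analysis identifies the cone of $\theta$ with the two-term complex $K := [\zp[\varpi] \xrightarrow{g-1} \zp[\varpi]]$ formed by the middle of the $4$-term sequence. Since each $\zp[\varpi]$ is a finitely generated free, hence weakly injective, $\zp[\varpi]$-module, $K$ lies in $\Perf_c(\zp[\varpi])$, so $\theta$ descends to an isomorphism in the Verdier quotient.

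Next, I would promote this pointwise isomorphism to a natural isomorphism $\mathrm{id} \Rightarrow [2]$ of endofunctors on $\T_\infty-mod_c$ via the $\zp$-linear derived tensor product with diagonal $\varpi$-action. For any $X \in D^b_c(\zp[\varpi])$, the morphism $\mathrm{id}_X \otimes^L_{\zp} \theta$ yields $X \simeq X \otimes^L_{\zp} \zp \to X \otimes^L_{\zp} \zp[2] \simeq X[2]$, manifestly natural in $X$. Its cone is $X \otimes^L_{\zp} K$. Using the standard isomorphism $M \otimes_{\zp} \zp[\varpi] \cong \zp[\varpi] \otimes_{\zp} M^{\mathrm{triv}}$ of $\zp[\varpi]$-modules with diagonal action (given by $m \otimes g^i \mapsto g^i \otimes g^{-i}m$), each component of $X\otimes_{\zp}K$ is weakly injective; combined with the flatness of $\zp[\varpi]$ over $\zp$, one concludes $X \otimes^L_{\zp} K \in \Perf_c$, so the cone vanishes in $\T_\infty-mod_c$.

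The main subtlety lies in confirming that the diagonal tensor product makes $\Perf_c(\zp[\varpi])$ a tensor ideal in $D^b_c(\zp[\varpi])$, so that the construction descends coherently to the Tate category and the isomorphism is indeed natural rather than merely pointwise. This is the integral analog of the mechanism employed by Treumann in the modular setting.
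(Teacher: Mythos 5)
Your proof is correct and follows essentially the same route as the paper: the paper also uses the four-term sequence $0\to\zp\to\zp[\vp]\xrightarrow{1-g}\zp[\vp]\to\zp\to 0$ to produce a map $\zp\to\zp[2]$ with perfect cone, and then upgrades it to a natural isomorphism by identifying $[2]$ with $\zp$-linear (diagonal) tensoring with $\zp[2]$, using that $\Perf_c(\zp[\vp])$ is a tensor ideal. Your write-up just makes explicit the untwisting isomorphism and the tensor-ideal point that the paper records separately.
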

\begin{proof}
The exact sequence 
\begin{align*}
0\to\bO\xrightarrow{N}\bO[\vp]\xrightarrow{1-g}\bO[\vp]\xrightarrow{\epsilon}\bO\to 0,
\end{align*}
where $N: \bO\lra \bO[\vp]$ denotes the multiplication by the norm $N$, gives a morphism
\begin{align}\label{eqn: deg 2 shift}
\bO\to\bO[2]
\end{align}
in $D^b_c(\bO[\vp])$, the cone of which lies in $\mathrm{Perf}_c(\bO[\varpi])$. Thus, this morphism becomes an isomorphism after applying $\bT^*$. The result follows by identifying the shift functor $[2]$ with a functor of tensoring over $\bO$ with $\bO[2]$.\end{proof}

For a 2-periodic triangulated category such as this, it makes sense to consider the $n$-fold homological shift for $n\in \zz/2\zz$. We denote the functor as $[n]$.

\subsection{Derived invariants}Recall the counit map
\begin{align*}
    \ep:\zvp&\lra \bO,\\
        g&\longmapsto 1,
\end{align*} and consider the derived functor of invariants
\begin{align*}
 \epsilon_\ast:=\underline{\Hom}(\bO,-):D^b_c(\zvp)\to D^+(\bO).
\end{align*}
More concretely, this functor is the composition
\begin{align*}
D^b_c(\zvp)\xrightarrow{I}K^+_c(Inj(\zvp))\xrightarrow{ \epsilon_\ast}K^+_c(Inj(\bO))\xrightarrow{Q}D^+_c(\bO),
\end{align*}
where $Q$ is the localization functor and $I$ comes by restriction from a right adjoint to the localization functor $K^+_c(Inj(\zvp))\to D^+_c(\zvp)$. Here $Inj(-)$ denotes the category of injective objects, $K^+$ denotes the bounded below homotopy category, $D^+$ denotes the (cohomologically) bounded below derived category, and the subscript $c$ indicates passing to the full subcategory of complexes with finitely generated cohomology modules.  On the level of complexes, $I$ sends an object in the derived category to an injective resolution.

\begin{Lem}
The derived invariants functor $ \epsilon_\ast$ preserves the finite generation of cohomology.
\end{Lem}
\begin{proof}
Consider the weakly injective resolution
\begin{align*}
\mathfrak{i}=(0\to\zvp\xrightarrow{1-g}\zvp\xrightarrow{N}\zvp\xrightarrow{1-g}\ldots)
\end{align*}
of $\bO$, where $N=\sum_{h\in\vp}h$ is the norm element.  For a bounded below complex 
\begin{align*}
B=(0\to B_0\to B_1\to\ldots)
\end{align*}
in $K^+(Inj(\zvp))$, the natural morphism from $B$ to the totalization of the double complex $B\otimes_{\bO}\mathfrak{i}$ is an isomorphism. Since totalization commutes with invariants, we see that $ \epsilon_\ast B$ is isomorphic to the totalization of the double complex $B\otimes_{\zvp}\mathfrak{i}$ written out below:
\[
    \begin{tikzcd}
    &\vdots&\vdots&\\
    0\ar[r]&B_0\ar[r]\ar[u,"1-g"]&B_1\ar[r]\ar[u,"1-g"]&\cdots\\
    0\ar[r]&B_0\ar[u,"N"]\ar[r]&B_1\ar[u,"N"]\ar[r]&\cdots\\
    0\ar[r]&B_0\ar[r]\ar[u,"1-g"]&B_1\ar[r]\ar[u,"1-g"]&\cdots\\
    &0\ar[u]&0\ar[u]&
    \end{tikzcd}
\]
Here the notation $1-g$ and $N$ in the complex indicates the action of elements of $\zvp$ on the $\zvp$-modules $B_i$. Using the horizontal-vertical spectral sequence, along with the fact that $\zvp$ is Noetherian, it follows that the associated graded module of $H^n \epsilon_\ast B$ with respect to the induced filtration is a sub-quotient of a finitely generated $\zvp$-modules, hence is finitely generated. Thus, the cohomology is itself finitely generated.
\end{proof}

\subsection{Tate cohomology} Now suppose that $B$ is an arbitrary bounded-below complex of $\bO[\vp]$-modules with finitely generated cohomology modules. The totalization of the double complex $B\otimes_{\bO}\mathfrak{i}$ is quasi-isomorphic to $B$ and is weakly injective. It follows that its invariant sub-complex, as written above, computes the cohomology of $ \epsilon_\ast B$. When $B$ is bounded, the periodicity of the double complex immediately implies the following lemma.
\begin{Lem}
If $B$ is bounded then these cohomology groups become $2$-periodic for large $n$; that is, the natural map 
\[
H^n \epsilon_\ast B\lra H^{n+2} \epsilon_\ast B
\]
induced by the morphism $B\lra B[2]$ induced from (\ref{eqn: deg 2 shift}) is an isomorphism for large $n$. 
\end{Lem}
\begin{Def}
 We define the \emph{Tate cohomology} functors $T^0$ and $T^1$ to be the resulting periodic cohomology groups. That is,
\begin{align*}
\left.\begin{matrix}
T^0&:=&\colim_{n}H^{2n} \epsilon_\ast\\
T^1&:=&\colim_nH^{2n+1} \epsilon_\ast\end{matrix}\right\}:D^b_c(\zvp)\to \bO\modc.
\end{align*}
\end{Def}

The preceding discussion shows that the colimits converge in finite time. Furthermore, the vertical-horizontal spectral sequence shows that cohomology groups $H^n \epsilon_\ast B$ vanish on perfect complexes for large $n$. It follows that the Tate cohomology functors factor through $\T_\infty\modc$; we abuse notation and denote the resulting functors as $T^0$ and $T^1$. For a distinguished triangle 
\[M\to N\to O\xrightarrow{+1}
\]
in $\T_\infty\modc,$ we obtain a $6$-periodic long exact sequence
\begin{align*}
\ldots\to T^0M\to T^0N\to T^0O\to T^1M\to T^1N\to T^1O\to\ldots.
\end{align*}

\subsection{Tate complex}\label{Section: tate complex} For computational reasons, it is useful to have a genuine complex whose the cohomology groups compute Tate cohomology. To this end, consider the 2-periodic acyclic complex 
\begin{align}\label{eqn: tate complex}
\liet=(\cdots\xrightarrow{N}\zvp\xrightarrow{1-g}\zvp\xrightarrow{N}\zvp\xrightarrow{1-g}\cdots).
\end{align}  
If $B=(0\to B_0\to \ldots\to B_n\to0)$ is bounded, then it is clear that one may compute $\liee B,\lieo B$ as the cohomology of the invariant sub-complex of the totalization of $B\otimes_{\bO}\liet$. This gives the totalization of the double complex 
\[
    \begin{tikzcd}
    &\vdots&&\vdots&\\
    0\ar[r]&B_0\ar[r]\ar[u,"1-g"]&\cdots\ar[r]&B_n\ar[r]\ar[u,"1-g"]&0\\
    0\ar[r]&B_0\ar[u,"N"]\ar[r]&\cdots\ar[r]&B_n\ar[u,"N"]\ar[r]&0\\
    0\ar[r]&B_0\ar[r]\ar[u,"1-g"]&\cdots\ar[r]&B_n\ar[r]\ar[u,"1-g"]&0\\
    &\vdots\ar[u]&&\vdots\ar[u]&
    \end{tikzcd}
\]
where the notation of the vertical maps indicates the action of elements of $\zvp$ on the $\zvp$-modules $B_i$ as above. Clearly this totalization coincides with the totalization of $B\otimes_{\zvp}\mathfrak{i}$ in large positive degrees. This allows us to work with explicit complexes rather than computing colimits. Both constructions easily generalize to the sheaf-theoretic versions studied in the next section.%
\begin{Lem}\label{Lem: valued in Fp mod}
The Tate cohomology functors $T^0,T^1$ are valued in the category of $\ff$-modules.
\end{Lem}
\begin{proof}
It is enough to check that $T^0\bO$ and $T^1\bO$ are $\ff$-modules. Utilizing the Tate complex $\ft$, we need to compute the cohomology of the complex
\[
\cdots\xrightarrow{N}\bO\xrightarrow{1-g}\bO\xrightarrow{N}\bO\xrightarrow{1-g}\cdots,
\]where $g$ acts trivially on $\bO$. It follows that
\[
T^0\bO=\ff,\:\text{ and }\:T^1\bO=0.\qedhere
\]
\end{proof}



\subsection{Parity objects} We now introduce the notion of Tate-parity objects. This is what ultimately plays the role of the JMW Condition \ref{parcon} in the construction of Tate-parity sheaves.

\begin{Def}
We say that an object $M$ of $\T_\infty\modc$ is \emph{Tate-even} (resp. \emph{Tate-odd}) if $T^1M=0$ (resp. $T^0M=0$).  We say that $M$ is \emph{Tate-parity} if it is a direct sum of an odd and an even object. 
\end{Def}

For example, the image of $\bO$ in $\T_\infty\modc$ is Tate-even, while the exact sequence
\[
0\lra \bO\xrightarrow{\cdot N}\bO[\vp]\lra \bO[\vp]/N\lra 0
\]
implies that $\bT^\ast\bO[\vp]/N\cong \bT^\ast\bO[1]$ is Tate-odd. We have the following fundamental fact:
\begin{Lem}\label{allparity}
\begin{enumerate}\item Every object of $\T_\infty\modc$ is Tate-parity.\
\item If $M$ is Tate-even and $N$ is Tate-odd then $\Hom_{\T_\infty\modc}(M,N)=0$.\
\item If $M$ is Tate-even then $M\cong \bT^*\bO^k$ for some non-negative integer $k$.\end{enumerate}
\end{Lem}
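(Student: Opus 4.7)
The plan is to reduce all three parts to two structural principles for the integral Tate category: a \emph{Representability Principle} stating that $\bT^*\zp$ corepresents $T^0$, and a \emph{Detection Principle} stating that Tate cohomology detects isomorphisms.

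The Representability Principle asserts that for every $B \in D^b_c(\zvp)$ and every $n \in \zz/2$ there is a natural isomorphism
\[
\Hom_{\T_\infty-mod_c}(\bT^*\zp, \bT^*B[n]) \cong T^n B.
\]
I would deduce this from the Verdier-quotient description of morphisms together with the colimit presentation $T^n B = \colim_k H^{n+2k}\epsilon^!B$. The Bott-type maps $\zp[-2k] \to \zp$ in $D^b_c(\zvp)$, each with perfect cone (arising by iterating the $4$-term exact sequence appearing in the proof of Proposition \ref{Prop: two periodic}), should form a cofinal system in the indexing category of morphisms $Y \to \zp$ with perfect cone, so that morphisms $\bT^*\zp \to \bT^*B[n]$ in the Tate category are computed as $\colim_k \Hom_{D^b_c}(\zp[-2k], B[n]) = \colim_k H^{n+2k}\epsilon^!B = T^n B$.

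The Detection Principle asserts that $\bT^*B \cong 0$ in $\T_\infty-mod_c$ iff $T^0B = T^1B = 0$. Granting both principles, the three parts of the lemma follow cleanly. For (3), let $M$ be Tate-even with $\dim_{\bF_p} T^0M = k$; a basis of $T^0M$ yields by Representability a morphism $\phi: \bT^*\zp^k \to M$ that is an isomorphism on $T^0$ by construction and on $T^1$ trivially. The cone of $\phi$ has vanishing Tate cohomology, so Detection forces the cone to be zero, whence $\phi$ is an isomorphism. Part (2) is then immediate: (3) writes $M \cong \bT^*\zp^k$ and Representability gives $\Hom(M,N) \cong (T^0N)^k = 0$ since $N$ is Tate-odd. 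For (1), I would lift bases of both $T^0M$ and $T^1M$ (using $\Hom_{\T_\infty-mod_c}(\bT^*\zp[1], M) \cong T^{-1}M \cong T^1M$ via $2$-periodicity) to obtain $\phi: \bT^*\zp^a \oplus \bT^*\zp^b[1] \to M$; again $\phi$ induces isomorphisms on Tate cohomology, so Detection makes it an isomorphism and exhibits $M$ as a direct sum of a Tate-even and a Tate-odd object.

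The hard part is the Detection Principle, or equivalently the implication $T^0B = T^1B = 0 \Rightarrow B \in \Perf_c(\zvp)$. By the colimit formula the hypothesis translates to $H^n\epsilon^!B = 0$ for all sufficiently large $n$, so $\epsilon^!B = \underline{RHom}_{\zvp}(\zp, B)$ is cohomologically bounded. Since $\zvp$ is Gorenstein of Krull dimension $1$, such boundedness should detect finite projective dimension. I would reduce by dévissage to the case where $B$ is a single module concentrated in one degree, then combine Noetherianity of $\zvp$ with the $2$-periodic syzygy description $\Omega^{2k}\zp \cong \zp$ and $\Omega^{2k+1}\zp \cong (1-g)\zvp$ to conclude that the module lies in the thick subcategory generated by $\zvp$, i.e., $\Perf_c(\zvp)$.
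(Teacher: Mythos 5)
Your reduction of all three parts to a Representability principle plus a Detection principle is sound, and it partly parallels the paper: Representability is the point case of Proposition \ref{colimy} (proved there by essentially your cofinality-of-Bott-maps argument), and your deduction of (3) is the same as the paper's, while your deductions of (1) and (2) replace the paper's appeal to Treumann's structure theorem for modules over a good coefficient algebra. The paper disposes of the Detection principle by working in the homotopy-category model of $\T_\infty-mod_c$, where vanishing of homotopy (i.e.\ Tate cohomology) groups detects the zero module, and points to the stable module category of \cite{BIK} for a more elementary route.

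The genuine gap is in your proposed proof of Detection, which is the crux of the whole lemma. The d\'evissage ``reduce to the case where $B$ is a single module concentrated in one degree'' fails: Tate-acyclicity of a complex is not inherited by its cohomology modules. A concrete counterexample is the cone of the Bott map $\zp\to\zp[2]$, i.e.\ the perfect two-term complex $(\zvp\xrightarrow{1-g}\zvp)$: it is Tate-acyclic, being perfect, yet both of its cohomology modules are the trivial module $\zp$, and $T^0\zp=\bF_p\neq 0$. So truncation triangles do not transport the vanishing of $T^\bullet B$ to the stalk modules. A correct elementary argument must go the other way around: using the surjections $\zvp\otimes_{\zp}M\to M$ one builds relative (weakly projective) resolutions and shows that every object of $D^b_c(\zvp)$ agrees, modulo $\Perf_c(\zvp)$, with a single finitely generated module (a relative syzygy) placed in one degree; for a single module one then needs the relative-homological criterion that a finitely generated $\zvp$-module whose Tate cohomology vanishes in two consecutive degrees is weakly projective. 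Your closing appeal to the periodic syzygies of the trivial module $\zp$ does not supply this, since it says nothing about an arbitrary Tate-acyclic module; as written, the hard implication $T^0B=T^1B=0\Rightarrow B\in\Perf_c(\zvp)$ remains unproved.
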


\begin{proof}We will give an argument in the language of homotopy theory, but remark that one can give a more elementary proof using stable module categories. As we have already mentioned, $\T_\infty\modc$ is equivalent to the homotopy category of a certain category of module spectra over a certain $\mathbb{E}_\infty$-ring spectrum $\T_\infty$ (see \cite{Tr}). The homotopy groups of $\T_\infty$ are equal to the Tate cohomology groups of the trivial $\bO[\vp]$-module, so that Lemma \ref{Lem: valued in Fp mod} implies
\[
\pi_\bullet (\T_\infty) \cong \ff[t,t^{-1}],
\]
where $t$ is in degree $2$. Therefore, $\T_\infty$ is a \emph{good coefficient algebra} (see \cite{Tr1}). For an $\mathbb{E}_\infty$-algebra $E$ we say that a module spectrum $M\in \Mod({E})$ is \emph{even} if all its odd homotopy groups vanish. We recall the following result for module spectra of such algebras.

\begin{Prop}\cite[Prop. 2.1]{Tr1}
Let $E$ be a good coefficient algebra. Then every object $M\in \Mod({E})$ is isomorphic to a direct sum $M_0\oplus \Sigma M_1$, where $M_i$ are both even. Moreover, if $M$ and $N$ are even, then there are natural isomorphisms
\begin{align*}
\begin{matrix}
[M,N] &\cong&\Hom_{\pi_0{E}}(\pi_0M,\pi_0N),\\
[M,\Sigma N]& \cong&\Ext^1_{\pi_0{E}}(\pi_0M,\pi_0N).\end{matrix}
\end{align*}
\end{Prop}

Parts (1) and (2) of the lemma follow, since a summand of a compact object is compact and $\pi_0(\T_\infty)=\ff$ is a field of odd characteristic. For part (3), let $M$ be Tate-even and choose a basis of the finite-dimensional vector space $T^0M$, with cardinality $k$ say. Since $T^0M=\Hom_{\T_\infty\modc}(\bT^*\bO,M)$, the choice of basis induces a map $\bT^*\bO^k\to M$ which becomes an isomorphism after applying $T^0$. The cone is trivial under both $T^0$, $T^1$ and so must be $0$.\end{proof}

\begin{Rem}\label{Rem: why not two}
This argument is where the assumption $p\neq 2$ is used. Indeed, $\Ext_{\ff_2}^1(\ff_2,\ff_2)\neq0$ so that Part (2) fails in this case. Our restriction to $p>2$ is motivated by the need to have a Tate-theoretic version of the JMW condition (\ref{parcon}). This lemma provides this condition. 
\end{Rem}

The following is an immediate consequence:
\begin{Cor}\label{Prop: Grothendieck}\
The functor $(T^0,T^1)$ is an equivalence between $\T_\infty\modc$ and the category of $\zz/2\zz$-graded finite dimensional $\ff$-vector spaces.\
\end{Cor}

\begin{Rem}\label{Rem: modular coefficients}
Corollary \ref{Prop: Grothendieck} implies there must be an isomorphism 
\begin{align*}
\bT^*\ff\cong\bT^*\bO\oplus\bT^*\bO[1].
\end{align*}
It is an enjoyable exercise to find an explicit isomorphism. 
\end{Rem}

\section{The integral Tate category}\label{Section: sheaves}

For the entirety of this section, we assume that $Y$ is a complex algebraic variety which we equip with the trivial action of $\vp$. Our present goal is to develop the theory of sheaves on $Y$ generalizing the category $\T_\infty\modc$ from the previous section. We also study two forms of Tate-cohomology, one valued in the category of constructible sheaves over $\ff$ (see \ref{Section: tate sheaves}) and the other an analogue of hypercohomology (see \ref{hyperco}).

\subsection{The construction} Consider the equivariant derived category $D^b_{\vp,c}(Y;\bO)$ and recall from (\ref{eqn: equivar coeff}) that we have an equivalence 
\begin{align*}
D^b_{\vp,c}(Y;\bO) \cong D^b_c(Y;\bO[\vp]).
\end{align*}
 Let $\mathrm{Perf}_c(Y; \bO[\vp])$ denote the thick subcategory of $D^b_c(Y;\bO[\vp])$ generated by sheaves of constructible $\zvp$-modules whose stalks are all weakly injective. An object of $\mathrm{Perf}_c(Y; \bO[\vp])$ is called a \emph{perfect} complex. As over a point, $\mathrm{Perf}_c(Y; \bO[\vp])$ is a tensor ideal in $D^b_{\vp,c}(Y;\bO)$.

\begin{Def} The \emph{constructible integral Tate category} is the Verdier quotient 
\begin{align*}
\mathrm{Sh}_c(Y;\T_\infty):=D^b_c(Y;\bO[\vp])/\mathrm{Perf}_c(Y;\bO[\vp]).
\end{align*}
\end{Def}
We write $\bT^*:D^b_c(Y;\bO[\vp])\to \mathrm{Sh}_c(Y;\T_\infty)$ for the projection functor. If $S$ is a fixed stratification of $Y$, we replace the subscript $c$ by the subscript $S$ in any of the above categories to indicate the full thick subcategory generated by sheaves which are constructible along $S$ and, in the case of $\mathrm{Perf}_S(Y;\bO[\vp])$, have weakly injective stalks. 

\begin{Rem}\label{equivoo}It is not immediate whether the natural functor
\begin{align*}
D^b_S(Y;\bO[\vp])/\mathrm{Perf}_S(Y;\bO[\vp])\lra \mathrm{Sh}_S(Y;\T_\infty)
\end{align*}
is an equivalence. We will show this in Corollary \ref{equivo}.\end{Rem} 

\begin{Rem}\label{broker?}In \cite{Tr}, a slightly different definition of perfect complexes is given: by strict analogy, we ought to say that a complex is perfect if its stalks are all isomorphic to $0$ in $\T_\infty\modc$. Certainly our notion of perfect complex is contained within this. As we will see in Corollary \ref{fixer!}, these two definitions do agree.\end{Rem}

\subsection{Tate cohomology sheaves}\label{Section: tate sheaves} The analogue of Proposition \ref{Prop: two periodic} holds for $\mathrm{Sh}_c(Y;\T_\infty)$, with essentially the same proof. In particular, we may also define the functors $T^0,T^1$. 
\begin{Def}\label{Def: tate cohom sheaves}For a complex $\F$ in $D^b_c(Y;\zvp)$ we define the \emph{Tate cohomology sheaves} of $\F$ as the colimits of the (eventually constant) systems:
\begin{align*}
\left.\begin{matrix}
T^0&:=&\colim_{n}H^{2n} \epsilon_\ast\\
T^1&:=&\colim_nH^{2n+1} \epsilon_\ast\end{matrix}\right\}:D^b_c(Y;\zvp)\to \mathrm{Sh}_c(Y;\ff)
\end{align*}
\end{Def}
As in Section \ref{sec:const}, we may also compute these cohomology functors for any bounded complex $\cB$ quasi-isomorphic to $\F$ and take the cohomology of the 2-periodic complex $\cB\otimes_{\zvp}\liet$, where $\liet$ is the Tate complex.

A priori $T^0,T^1$ are valued in constructible sheaves of $\bO$-modules, but they are easily seen to be compatible with stalks and hence Lemma \ref{Lem: valued in Fp mod} implies they take values in constructible sheaves of $\ff$-modules. Likewise, for a fixed stratification $S$ we have the functors
\begin{align*}
T^0,T^1:\mathrm{Sh}_S(Y;\T_\infty)\to \mathrm{Sh}_S(Y;\ff).
\end{align*}
Finally, for a distinguished triangle $\mathcal{F}\to\mathcal{G}\to\mathcal{H}\xrightarrow{+1}$ in $\mathrm{Sh}_c(Y;\T_\infty)$, we have a $6$-periodic long exact sequence
\begin{align*}
\ldots\to T^0\mathcal{F}\to T^0\mathcal{G}\to T^0\mathcal{H}\to T^1\mathcal{F}\to T^1\mathcal{G}\to T^1\mathcal{H}\to\ldots.
\end{align*}

\subsection{The six functors}\label{Rem: six functors}We remark that the six functors for $D^b_{\vp,c}(-;\bO)$ all send perfect complexes to perfect complexes (when $\vp$ acts trivially on the underlying spaces). In particular, we obtain via descending along the localization $\bT^\ast$ a notion of the six functors on $\mathrm{Sh}_c(-;\T_\infty)$; we shall use the standard notations:
\begin{align*}
f_*,f^*,f_!,f^!,\mathbb{D},\underline{\Hom}
\end{align*}
for these. We also obtain a tensor product
\[
\F_1\otimes \F_2=\mathbb{D}\underline{\Hom}(\F_1,\mathbb{D}\F_2).
\]The proof is essentially the same as for $\mathrm{Sh}_c(-,\T_0)$ given in \cite{Tr}. Although it is not stated in \emph{loc. cit.}, it is a formal consequence that the usual adjunctions hold. 

Note that we have briefly switched notation from $D^b_{c}(-;\zvp)$ to $D^b_{\vp,c}(-;\bO)$ in order to emphasize that the Verdier duality and tensor product functors are taken $\bO$-linearly (and given, respectively, the inverse and diagonal $\vp$-equivariant structure). If we were to dualize or tensor instead over $\zvp$, we would not preserve boundedness.

\subsection{Extending scalars}\label{Sec: scalars} We may compute the Tate cohomology sheaves in the following case: recall the \emph{Tate extension of scalars functor}
\begin{equation}\label{eqn: tate base change}
    \bT: D^b_c(Y;\bO)\lra \mathrm{Sh}_c(Y;\T_\infty)
\end{equation}
given by the composition $\bT:=\bT^\ast \epsilon^\ast$ as in the introduction\footnote{This functor is analogous to the functor $-\otimes_{\ff}\T_0$ given in \cite{Tr}.} 

\begin{Lem}\label{Lem: cohom calc}
Let $\F$ be an object of $D^b_c(Y;\bO)$ whose cohomology sheaves $H^n\F$ have $\bO$-free stalks. Then we have a natural isomorphism
\begin{align*}
T^i(\bT\F)\cong\bigoplus_{n\in i}\ff\otimes_{\bO}H^n\F
\end{align*}
for any $i\in\zz/2\zz$. 
\end{Lem}

\begin{proof}Fix a bounded complex $\cB$ quasi-isomorphic to $\F$, which we further assume has torsion-free stalks. This is possible by our assumption that $H^i\F$ are $\bO$-free. We have a canonical isomorphism 
\begin{align*}
 \epsilon^*\cB\otimes_{\zvp}\liet \cong \bigoplus_{n\in\zz}\cB\otimes_{\bO}\ff[2n]
\end{align*}
and therefore we identify
\begin{align*}
T^i\bT\F\cong\bigoplus_{n\in i}H^{n}(\cB\otimes_{\bO}\ff).
\end{align*}
There is a Cartan--Eilenberg spectral sequence converging to $H^{\bullet}(\cB\otimes_{\bO}\ff)$ whose $E_2$ page equals $H^\bullet(\cB)\otimes^\bullet  \ff$. Since $\bO$ has homological dimension $1$, the spectral sequence degenerates here for any $\cB$. If further each $H^n(\cB)$ has $\bO$-free stalks, then the $E_2$-page lives in a single row, so that the associated filtration is trivial and we obtain the desired equality.
\end{proof}

\subsection{Hom-sets as colimits}There is a relatively simple description of hom-sets in the Tate category. 

\begin{Prop}\label{colimy}For any complexes $\F$ and $\G$ in $D^b_c(Y;\zvp)$, there is a natural isomorphism
\[
\Hom_{\mathrm{Sh}_c(Y;\T_\infty)}(\bT^*\F,\bT^*\G)\cong\colim_n\Hom_{D^b_c(Y;\zvp)}(\F,\G[2n]).
\]\end{Prop}
\begin{proof}
Recall that in $D^b_c(Y;\zvp)$ we have a natural transformation $[0]\to[2]$, which becomes an isomorphism after applying $\bT^*$. It follows by functoriality that for any $\F,\G$ in $D^b_c(Y;\zvp)$, the localization $\bT^*$ induces a map
\begin{align*}
C:\colim_n\Hom_{D^b_c(Y;\zvp)}(\F,\G[2n])\lra \Hom_{\mathrm{Sh}_c(Y;\T_\infty)}(\bT^*\F,\bT^*\G),
\end{align*}
which is functorial in $\F$ and $\G$. Moreover, morphisms on the left can be composed in the natural manner, and $C$ respects composition. We claim that $C$ is an isomorphism.

Consider the category $\widetilde{\mathrm{Sh}}_c$ whose objects are the same as those of $D^b_c(Y;\zvp)$ with morphisms given by
\begin{align*}
\Hom_{\widetilde{\mathrm{Sh}}_c}(\F,\G):=\colim_n\Hom_{D^b_c(Y;\zvp)}(\F,\G[2n]).
\end{align*}
Then $\widetilde{\mathrm{Sh}}_c$ is a triangulated category, and $\bT^*$ factors as the composition of the triangulated functors
\begin{align*}
D^b_c(Y;\zvp)\xrightarrow{\widetilde{\bT^*}} \widetilde{\mathrm{Sh}}_c\xrightarrow{C} \mathrm{Sh}_c(Y;\T_\infty).
\end{align*}

We claim that $\widetilde{\bT^*}$ kills perfect complexes. To see this, it is enough to show that for any perfect complex $\cP$, we have 
\[
\Hom_{D^b_c(Y;\zvp)}(\cP,\cP[n])=0
\] for all $n$ large enough. In fact, we will show that for any $\cF$ in $D^b_c(Y;\zvp)$, we have $\Hom_{D^b_c(Y;\zvp)}(\cF,\cP[n])=0$ for all $n$ large enough. 

By Verdier duality and since $\mathrm{Perf}_S(Y;\zvp)$ is a tensor ideal, it suffices to consider the case when $\F$ is the constant sheaf $\underline{\bO}=\pi^\ast\bO$, where $\pi:Y\to \{pt\}$ denotes the constant map. Taking stalks, the claim then follows from the fact that a perfect complex in $D^b_c(\zvp)$ has bounded derived invariants. This holds as any such object is a finite iterated cone of weakly injective (in particular, $ \epsilon_\ast$-acyclic) $\zvp$-modules.

It follows that the functor $\widetilde{\bT^*}$ factors as a composition:
\begin{align*}
D^b_c(Y;\zvp)\xrightarrow{{\bT^*}}\mathrm{Sh}_c(Y;\T_\infty)\xrightarrow{J} \widetilde{\mathrm{Sh}}_c 
\end{align*}
such that $C\circ J\cong id$. But $J$ is full, since
\[
\Hom_{\mathrm{Sh}_c(Y;\T_\infty)}(\bT^*\F,\bT^*\mathcal{G}[2n])=\Hom_{\mathrm{Sh}_c(Y;\T_\infty)}(\bT^*\F,\bT^*\mathcal{G})
\]
for all $n$. 
\end{proof}

We are now able to demonstrate Remark \ref{equivoo}. Fix a stratification $S$ of $Y$ and recall that the notations 
\[
D^b_S(Y;\zvp),\quad \mathrm{Sh}_S(Y;\T_\infty),\quad\mathrm{Perf}_S(Y;\zvp)
\] denote, respectively, the full thick subcategories of 
\[
D^b_c(Y;\zvp), \quad \mathrm{Sh}_c(Y;\T_\infty), \quad\mathrm{Perf}_c(Y;\zvp)
\]generated by all sheaves constructible along $S$ with, in the case of $\mathrm{Perf}_S(Y;\zvp)$, weakly injective stalks.

\begin{Cor}\label{equivo}The functor 
\begin{align*}
D^b_S(Y;\zvp)/\mathrm{Perf}_S(Y;\zvp)\to \mathrm{Sh}_S(Y;\T_\infty)
\end{align*}
is an equivalence.\end{Cor}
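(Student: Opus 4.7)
The plan is to establish the equivalence by (i) imitating Proposition \ref{colimy} within the $S$-constructible setting to obtain full faithfulness, then (ii) invoking the definition of $Sh_S(Y;\T_\infty)$ as the thick subcategory of $Sh_c(Y;\T_\infty)$ generated by images of $S$-constructible sheaves. Since $\Perf_S(Y;\zvp)\subseteq\Perf_c(Y;\zvp)$, the universal property of the Verdier quotient supplies a canonical triangulated functor
$$
\Phi : D^b_S(Y;\zvp)/\Perf_S(Y;\zvp) \to Sh_c(Y;\T_\infty),
$$
whose essential image lies in $Sh_S(Y;\T_\infty)$ by construction.

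For full faithfulness, I would re-run the proof of Proposition \ref{colimy} with $D^b_S$ in place of $D^b_c$ and $\Perf_S$ in place of $\Perf_c$. Concretely: form the intermediate category $\widetilde{Sh}_S$ whose objects are those of $D^b_S$ and whose morphisms are $\Hom_{\widetilde{Sh}_S}(\F,\G)=\colim_n\Hom_{D^b_S}(\F,\G[2n])$, and verify that the natural functor $D^b_S\to\widetilde{Sh}_S$ kills $\Perf_S$. The sole content of this step is the bound $\Hom_{D^b_S}(\F,\cP[n])=0$ for $n\gg 0$ when $\cP\in\Perf_S$; but since $D^b_S\hookrightarrow D^b_c$ is full, this Hom coincides with its $D^b_c$-analog, and the boundedness argument from Proposition \ref{colimy} (Verdier duality plus the tensor ideal property, reducing to boundedness of derived invariants of perfect complexes over a point) applies without change. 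Combining this with Proposition \ref{colimy} itself yields a chain of isomorphisms
\begin{align*}
\Hom_{D^b_S/\Perf_S}(\F,\G) &\cong \colim_n\Hom_{D^b_S}(\F,\G[2n])\\
&= \colim_n\Hom_{D^b_c}(\F,\G[2n])\\
&\cong \Hom_{Sh_c(Y;\T_\infty)}(\bT^*\F,\bT^*\G),
\end{align*}
where the middle equality uses fullness of $D^b_S\subseteq D^b_c$. Hence $\Phi$ is fully faithful.

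It remains to establish essential surjectivity onto $Sh_S(Y;\T_\infty)$. The essential image of $\Phi$ is a triangulated subcategory of $Sh_c(Y;\T_\infty)$ containing $\bT^*\F$ for every $S$-constructible $\F$; by the definition of $Sh_S(Y;\T_\infty)$ as the thick subcategory generated by such objects, it suffices to show this essential image is closed under direct summands. This will be the main technical obstacle, as it amounts to idempotent completeness of the Verdier quotient $D^b_S/\Perf_S$. I would resolve it either by appealing to the general fact that Verdier quotients of Karoubian triangulated categories by thick subcategories remain Karoubian (applied to $\Perf_S\subseteq D^b_S$), or by a direct lifting argument: given an idempotent endomorphism of $\bT^*\F$ in $Sh_S$, the colimit description already established represents it by an actual morphism $\F\to\F[2n]$ for some large $n$, which one can then split in $D^b_S$ and pass to the quotient using the 2-periodicity of $\Phi$.
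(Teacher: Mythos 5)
The full-faithfulness half of your argument is exactly the paper's proof: the authors note that $\Perf_S(Y;\zvp)$ is still a tensor ideal of $D^b_S(Y;\zvp)$ containing the periodicity complex $(0\to\pi^*\zvp\xrightarrow{N}\pi^*\zvp\to 0)$, so the colimit computation of Proposition \ref{colimy} runs verbatim for the left-hand side, and fullness of $D^b_S(Y;\zvp)\subseteq D^b_c(Y;\zvp)$ identifies $\colim_n\Hom_{D^b_S}(\F,\G[2n])$ with $\colim_n\Hom_{D^b_c}(\F,\G[2n])\cong\Hom_{Sh_c(Y;\T_\infty)}(\bT^*\F,\bT^*\G)$. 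The paper treats essential surjectivity as immediate and offers no argument, so the entire technical content of its proof is the step you reproduce correctly.

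The part you add is where the trouble lies. Your primary tool --- ``Verdier quotients of Karoubian triangulated categories by thick subcategories remain Karoubian'' --- is not a theorem; it is false in general. This is precisely why Balmer and Schlichting study the idempotent completion of a Verdier quotient: the quotient only embeds densely into a Karoubian category, and singularity categories $D^b(\mathrm{coh})/\Perf$ furnish standard examples where idempotent completeness fails. (Note also that the Karoubianness criterion of \cite{LC} requires a bounded t-structure, which the $2$-periodic quotient cannot have, so that route is closed as well.) Your fallback argument does not work as stated either: an idempotent endomorphism of $\bT^*\F$ lifts via the colimit description only to a morphism $e:\F\to\F[2n]$ in $D^b_S(Y;\zvp)$ which is ``idempotent'' merely up to the colimit identifications, i.e.\ after composing with further periodicity maps and modulo morphisms factoring through $\Perf_S(Y;\zvp)$; there is no actual idempotent in $D^b_S(Y;\zvp)$ to split, so Krull--Remak--Schmidt for $D^b_S$ gives you nothing directly. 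If you insist on proving summand-closure of the essential image, you need a genuine idempotent-splitting argument in the quotient --- for instance using that the stabilized endomorphism algebras $\colim_n\Hom_{D^b_c}(\F,\F[2n])$ are finite-dimensional over $\bF_p$ and lifting idempotents along the relevant algebra surjections, or an induction over strata via the two-out-of-three splitting lemma of \cite{LC}, which is how the paper later splits idempotents for Tate-parity complexes --- or else one reads the definition of $Sh_S(Y;\T_\infty)$ so that the generators already exhaust it up to isomorphism, which is evidently how the paper's one-line claim of essential surjectivity is intended.
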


\begin{proof}It is certainly essentially surjective. It still holds that $\mathrm{Perf}_S(Y;\zvp)$ is a tensor ideal of $D^b_S(Y;\zvp)$ containing the complex 
\[
0\to\underline{\zvp}\xrightarrow{N}\underline{\zvp}\to0.
\]This implies that 
\[
D^b_S(Y;\zvp)/\mathrm{Perf}_S(Y;\zvp)
\]
is still $2$-periodic and the above calculation of hom-sets works out in exactly the same way for the left-hand side of the claimed equivalence. This shows the functor in question is fully faithful.
\end{proof}

\subsection{Image of Tate extension of scalars}

For sheaves in the image of the Tate extension of scalars functor $\bT$, the computation of hom-sets simplifies dramatically. Write $\bF$ for the modular reduction functor; it is left adjoint to the inclusion functor, and right adjoint to the $[-1]$-shift of the inclusion. Temporarily writing $I$ for the inclusion functor, we have:
\begin{align*}
I\bF(-)=\pi^*\ff\otimes(-)=\underline{\Hom}(\pi^*\ff,-)[1].
\end{align*}

\begin{Prop}\label{moddyprop}Let $\F$, $\G$ be objects of $D^b_c(Y;\bO)$. Then we have
\begin{align*}
\Hom_{\mathrm{Sh}_c(Y;\T_\infty)}(\bT\F,\bT\G)=\bigoplus_{i\in\zz}\Hom_{D^b_c(Y;\ff)}(\bF\F,\bF\G[2i]).
\end{align*}
\end{Prop}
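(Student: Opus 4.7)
The plan is to combine Proposition~\ref{colimy} with the $(\epsilon_*,\epsilon^!)$-adjunction, compute $\epsilon^!\epsilon_*\G$ explicitly, and then directly analyse the resulting filtered colimit. By those two ingredients the LHS equals
$$
\colim_n Hom_{D^b_c(Y;\zp)}(\F,\epsilon^!\epsilon_*\G[2n]),
$$
so everything reduces to understanding the tower $(\epsilon^!\epsilon_*\G[2n])_n$ and its transition maps, induced by the canonical morphism $\zp\to\zp[2]$ of Proposition~\ref{Prop: two periodic}.

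Using the periodic projective resolution $P_\bullet\to\zp$ over $\zvp$ (with $P_n=\zvp$ and alternating differentials $1-g$, $N$), together with the fact that $\vp$ acts trivially on $\epsilon_*\G$, one computes $\epsilon^!\epsilon_*\G=Hom_{\zvp}(P_\bullet,\epsilon_*\G)$ as the complex
$$
(\G\xrightarrow{0}\G\xrightarrow{p}\G\xrightarrow{0}\G\xrightarrow{p}\cdots)
$$
concentrated in non-negative cohomological degrees. The interspersed zero differentials allow the consecutive pairs $\G\xrightarrow{p}\G$ to split off as shifted copies of $\bF\G$, producing a decomposition $\epsilon^!\epsilon_*\G\cong \G\oplus\bigoplus_{k\geq 1}\bF\G[-2k]$ in $D^+_c(Y;\zp)$. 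The technical heart of the argument is then to identify the transition map $\epsilon^!\epsilon_*\G\to\epsilon^!\epsilon_*\G[2]$ in these coordinates: recognising the natural $\zp\to\zp[2]$ as the generator $t\in H^2(\vp;\zp)\cong\bF_p$ of the algebra $\epsilon^!\zp=RHom_{\zvp}(\zp,\zp)$, the transition becomes multiplication by $t$, which through the splitting sends the $\G$-summand to the newly appearing $\bF\G$-summand of $\epsilon^!\epsilon_*\G[2]$ via reduction mod $p$, sends each $\bF\G[-2k]$ isomorphically onto the corresponding summand of the shifted decomposition, and leaves the fresh $\G[2]$-summand of the target outside its image.

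Applying $Hom_{D^b_c(Y;\zp)}(\F,-)$, and using that $\F,\G$ are bounded (so every relevant direct sum is effectively finite and commutes with $Hom$), the filtered colimit collapses: each $Hom(\F,\bF\G[2l])$ ($l\in\zz$) appears at some finite stage and is preserved by identity thereafter, while each $Hom(\F,\G[2n])$ is identified via reduction with $Hom(\F,\bF\G[2n])$ at the next stage. This yields $\bigoplus_{l\in\zz}Hom_{D^b_c(Y;\zp)}(\F,\bF\G[2l])$, and the $(\bF,I)$-adjunction between modular reduction $\bF$ and the inclusion $I:D^b_c(Y;\bF_p)\hookrightarrow D^b_c(Y;\zp)$ rewrites each summand as $Hom_{D^b_c(Y;\bF_p)}(\bF\F,\bF\G[2l])$. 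The genuine obstacle is the middle step --- correctly pinning down how multiplication by $t$ acts on the chosen splitting of $\epsilon^!\zp$; everything else is essentially formal.
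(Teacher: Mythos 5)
Your argument is correct, but it takes a genuinely different route from the one in the paper. You run everything through Proposition~\ref{colimy} and the $(\epsilon_*,\epsilon^!)$-adjunction, so that the whole statement becomes an explicit analysis of the tower $\epsilon^!\epsilon_*\G[2n]$: your computation $\epsilon^!\epsilon_*\G\cong \G\oplus\bigoplus_{k\geq1}\bF\G[-2k]$ (via the standard $2$-periodic free resolution, on which $1-g$ acts by $0$ and $N$ by $p$ for trivially equivariant objects) is right, and so is your identification of the transition map: at the chain level the periodicity class of Proposition~\ref{Prop: two periodic} is realized by the identity in each degree of the periodic complex, which is exactly reduction mod $p$ on the $\G$-summand, the identity on each $\bF\G[-2k]$, and misses the fresh $\G[2]$-summand; the colimit then collapses as you say, and the extension-of-scalars adjunction $Hom_{D^b_c(Y;\zp)}(\F,I\bF\G[2l])\cong Hom_{D^b_c(Y;\bF_p)}(\bF\F,\bF\G[2l])$ finishes it. The paper instead never touches the colimit description: it rewrites the left-hand side as $T^0\epsilon_*\pi_*\underline{Hom}(\F,\G)$, computes $T^0$ with the two-periodic Tate complex $\liet$ --- whose key feature is the splitting $\epsilon_*\cB\otimes_{\zvp}\liet=\bigoplus_d\cB\otimes_{\zp}\bF_p[2d]$, i.e.\ the already-stabilized version of your tower, so no transition maps need to be identified --- and then concludes with the projection formula and the identity $I\bF(-)=\underline{Hom}(\pi^*\bF_p,-)[1]$. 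What your route buys is elementariness and explicitness (no internal $\underline{Hom}$, no projection formula, and the compatibility recorded in the Addendum~\ref{addendum} becomes visible directly, since a map $\F\to\G$ lands in the $i=0$ summand at the first transition); what it costs is precisely the step you flag as the technical heart, namely pinning down cup product with $t$ on your chosen splitting, together with two small points you should make explicit: the Homs after adjunction live in $D^+_c(Y;\zp)$ because $\epsilon^!\epsilon_*\G$ is unbounded above, and the identification of the colimit's transition maps with $\epsilon^!$ of the natural map $[0]\to[2]$ uses naturality of the $(\epsilon_*,\epsilon^!)$-adjunction. Both proofs ultimately rest on the same one-line computation for trivial $\vp$-actions; the paper's use of $\liet$ just packages the stabilization once and for all.
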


\begin{proof}Freely using the fact that the six functors commute with $ \epsilon^*$ and $\bT^*$ and hence with $\bT$, we have 
\begin{align*}
\begin{matrix*}[l]\Hom_{\mathrm{Sh}_c(Y;\T_\infty)}(\bT\F,\bT\G)& =& \Hom_{\mathrm{Sh}_c(Y;\T_\infty)}(\underline{\bO},\bT\underline{\Hom}(\F,\G))\\
&=&\Hom_{\T_\infty\modc}(\bO,\bT\pi_*\underline{\Hom}(\F,\G))\\
&=&T^0\bT\pi_*\underline{\Hom}(\F,\G)\\
&=&H^0( \epsilon^*\pi_*\underline{\Hom}(\F,\G)\otimes_{\zvp}\liet),\\
&=&H^0(\bigoplus_{i\in\zz}(\pi_*\underline{\Hom}(\F,\G)\otimes_{\bO}\ff[2i]))\\
&=&\bigoplus_{i\in\zz}H^{2i}(\pi_*\underline{\Hom}(\F,\G)\otimes_{\bO}\ff).
\end{matrix*}
\end{align*}

Then, by the projection formula we have 
\begin{align*}
\begin{matrix*}[l]H^{2i}(\pi_*\underline{\Hom}(\F,\G)\otimes_{\bO}\ff)&=&H^{2i}(\pi_*(\underline{\Hom}(\F,\G)\otimes\pi^*\ff))\\
&=&H^{2i}(\pi_*\underline{\Hom}(\pi^*\ff,\underline{\Hom}(\F,\G))[1])\\
&=&H^{2i}(\pi_*\underline{\Hom}(\pi^*\ff\otimes\F,\G)[1])\\
&=&\Hom_{D^b_c(Y;\bO)}(\pi^*\ff\otimes\F,\G[2i+1])\\
&=&\Hom_{D^b_c(Y;\bO)}(I\bF\F,\G[2i+1])\\
&=&\Hom_{D^b_c(Y;\ff)}(\bF\F,\bF\G[2i]),
\end{matrix*}
\end{align*}
which completes the calculation.\end{proof}

\subsection{Relation to modular reduction}\label{addendum}By functoriality, $\bT$ induces maps
\begin{align*}
\bT:\Hom_{D^b_c(Y;\bO)}(\F,\G)\lra \Hom_{\mathrm{Sh}_c(Y;\T_\infty)}(\bT\F,\bT\G)
\end{align*}
which are compatible with all compositions. Tracing through the calculation of Proposition \ref{moddyprop}, we have the commutative diagram
\[
\begin{tikzcd}
\Hom_{D^b_c(Y;\bO)}(\F,\G)\ar[rr,"\bT"]\ar[rd,swap,"\ff"]&&\Hom_{\mathrm{Sh}_c(Y;\T_\infty)}(\bT\F,\bT\G)\ar[ld,"p"]\\
&\Hom_{D^b_c(Y;\ff)}(\bF\F,\bF\G),&
\end{tikzcd}
\]
where the arrow $p$ is the projection of the $i=0$ summand
\begin{align*}
\Hom_{\mathrm{Sh}_c(Y;\T_\infty)}(\bT\F,\bT\G)=\bigoplus_{i\in\zz}\Hom_{D^b_c(Y;\ff)}(\bF\F,\bF\G[2i])\xrightarrow{p}\Hom_{D^b_c(Y;\ff)}(\bF\F,\bF\G).
\end{align*}

%

\subsection{The Tate hypercohomology spectral sequence}\label{hyperco} If we equip the point $\{pt\}$ with a trivial $\vp$-action and denote by $\pi:Y\to \{pt\}$ the constant map, we have an induced functor (see the discussion after \cite[Proposition 4.3]{Tr})
\[
\pi_\ast: \mathrm{Sh}_c(Y;\T_\infty)\lra\mathrm{Sh}_c(\{pt\};\T_\infty)=\T_\infty\modc;
\]
composing with the Tate cohomology functors $T^\bullet$ on $\T_\infty\modc$ gives a notion of \emph{Tate hypercohomology} $\mathrm{TH}^\bullet$. The goal of this section is to give a spectral sequence 
\[
E^{p,q}_2(\F)=H^q(Y,T^p(\F))\Rightarrow TH^{p+q}\F
\]
abutting to this cohomology such that $E^{\bullet,\bullet}_n(\F)$ is a bi-graded module of $E^{\bullet,\bullet}_n(\underline{\bO})$ for each $n$. Making this precise requires introducing a certain auxiliary category  mapping to $\T_\infty\modc$, which we discuss below. We then construct a spectral sequence (see Lemma \ref{Lem: spectral}) relating it to the Tate cohomology sheaves. This will be used in subsequent sections to study indecomposible objects in $\mathrm{Sh}_c(Y;\T_\infty)$; see Theorem \ref{megathm}.

Consider the dg-algebra over $\zvp$
\begin{align*}
E:=(\pi_*\underline{\bO})^{op}.
\end{align*}
More precisely, we set $E=\pi_*^0\cC$ for any coconnective $\pi_*$-acyclic dg-algebra $\cC$ in $D^b_c(Y;\zvp)$ equipped with a map $\underline{\bO}\to \cC$ of dg-algebras that is also a quasi-isomorphism. Here $\pi_*^0$ denotes the underived global sections. Such a $\cC$ certainly exists, and may be chosen to be bounded since $\pi_*$ has finite cohomological dimension. 

Let $E\moddg$ denote the full subcategory of the derived category of dg-modules over $E$ consisting of those dg-modules whose underlying complex is in $D^b_c(\zvp)$. We note that $\cC$ may be taken to have trivial $\vp$-action, and therefore so may $E$. We emphasize, however, that $E$ is a dg-algebra over $\zvp$ by definition. In particular, $E\moddg$ consists of complexes of $\zvp$-modules on which the $\vp$-action is not necessarily trivial. 

The pushforward $\pi_*$ factors
\[
\begin{tikzcd}
D^b_c(Y;\zvp)\ar[rd,"\Pi_*"]\ar[rr,"\pi_\ast"]&& D^b_c(\zvp)\\
&E\moddg,\ar[ru,"For"]&
\end{tikzcd}
\]where $For: E\moddg\lra D^b_c(\zvp)$ is the forgetful functor.  For $\F$ in $D^b_c(Y;\zvp),$ the underlying object of $\Pi_\ast\F$ in $D^b_c(\zvp)$ is just $\pi_*\F$, but we give it a different name to indicate that we are remembering the structure of an $E$-dg-module. 

We say an $E$-dg-module is \emph{perfect} if it is weakly injective as an object of $D^b_c(\zvp)$; we let $\Perf_{dg}(E)$ the thick subcategory of $E\moddg$ generated by such objects. The functor $\Pi_*$ is triangulated and sends perfect complexes to perfect complexes since $\pi_*$ does. Thus, we obtain a functor
\begin{align*}
\Pi_*:\mathrm{Sh}_c(Y;\T_\infty)\lra E\moddg/\Perf_{dg}(E).
\end{align*} 
The induced restriction functor
 \[
 For: E\moddg/\Perf_{dg}(E)\lra \T_\infty\modc
 \]
 is right adjoint to $-\otimes E$, where the tensor product is the one discussed in Section \ref{Rem: six functors} (see also \cite[Section 4.1.1]{Tr}).
 
 Recalling that 
 \[
 T^nS= \Hom_{\T_\infty\modc}(\bT^\ast\bO,S[n])
 \]
 for $S$ in $\T_\infty\modc$, we see that the Tate cohomology of an object in $E\moddg/\Perf_{dg}(E)$ is naturally equipped with the structure of a graded $T^\bullet E$-module. In particular, for a complex $\F$ in $\mathrm{Sh}_c(Y;\T_\infty)$, the module $T^\bullet\Pi_*\F$ is a graded $T^\bullet E$-module. 
\begin{Def}
We define the \textbf{Tate hypercohomology} functors
\begin{align*}
\mathrm{TH}^\bullet:=T^\bullet\circ\Pi_*:\mathrm{Sh}_c(Y;\T_\infty)\lra T^\bullet E\text{-}\mathrm{Mod}_{gr} \:\text{ with }\bullet\in \{0,1\},
\end{align*}
 to be the composition of $\Pi_\ast$ with the Tate cohomology functors $T^0$ and $T^1$. 
\end{Def}
\begin{Rem}
In terms of objects, this functor does nothing more than take derived global sections, project to $\T_\infty\modc$, and take Tate cohomology. The dg-algebra $E$ and dg-module structures are technical tools to ensure that the resulting cohomology is a graded module for the "Tate cohomology" of $Y$, $\mathrm{TH}^\bullet(\underline{\bO})\cong T^\bullet E$ (see the discussion after the proof of Lemma \ref{Lem: spectral}). 
\end{Rem}

We now construct a spectral sequence converging to $\mathrm{TH}^\bullet$, the $E_2$-page of which is $R^\bullet\pi_*(T^\bullet(-))$. The construction is likely standard, but we have not found an explicit reference.

Fix a complex $\cF$ in $\mathrm{Sh}_c(Y;\T_\infty)$, and let $\F\to\mathcal{B}=(0\to\cB_0\to\cdots\to \cB_n\to0)$ be a bounded replacement of $\F$ such that each $\cB_i$ is $\pi_*$-acyclic and $\cB$ is a $\cC$-dg-module. Then $\Pi_*\F$ is isomorphic in $E\moddg$ to 
\[
B:=\pi_*\cB=(0\lra \pi_*^0\cB_0\lra\cdots\lra\pi_*^0\cB_n\lra0).
\]
Therefore, $\mathrm{TH}^\bullet \F$ is computed as the cohomology of the totalization of $B\otimes_{\zvp}\liet$ (see Section \ref{Section: tate complex}). This complex is the same as the global sections $\pi^0_*(\cB\otimes_{\zvp}\liet)$, since $\cB$ is bounded.

To simplify notation, let us write $\cA$ for the totalization of the complex $\cB\otimes_{\zvp}\liet$. Let $\cI$ be a Cartan--Eilenberg resolution of $\cA$ (for its existence, see \cite[tag: 015G]{stacks-project}). Thus we have a double complex
\[
    \begin{tikzcd}
    &\vdots&\vdots&\vdots&\\
    \cdots\ar[r]&\cI_{n-1,1}\ar[r]\ar[u]&\cI_{n,1}\ar[r]\ar[u]&\cI_{n+1,1}\ar[r]\ar[u]&\cdots\\
    \cdots\ar[r]&\cI_{n-1,0}\ar[u]\ar[r]&\cI_{n,0}\ar[r]\ar[u]&\cI_{n+1,0}\ar[u]\ar[r]&\cdots\\
    \cdots\ar[r]&\cA_{n-1} \ar[r]\ar[u]&\cA_{n}\ar[u] \ar[r]&\cA_{n+1} \ar[r]\ar[u]&\cdots\\
    &0\ar[u]&0\ar[u]&0\ar[u]&
    \end{tikzcd}
\]
such that the columns are exact and consist of injective objects above the row containing $\cA$. By construction, the columns of the $E_1$-page of the horizontal-vertical spectral sequence are resolutions of the Tate cohomology sheaves $T^n\F\cong T^n\cB$. It follows that the $E_2$-page of the horizontal-vertical spectral sequence of the double complex $\pi_*^0\cI$ is
\[
E_2^{p,q}(\pi_*^0\cI)=R^p\pi_*(T^q(\cB)).
\]

\begin{Lem}\label{Lem: spectral}
The spectral sequence 
\[
E_2^{p,q}(\pi_*^0\cI)=R^p\pi_*(T^q(\cB))
\]converges to $\mathrm{TH}^{p+q}\F$.
\end{Lem} 

\begin{proof}
We note first of all that the $E_2$-page is bounded vertically, so the spectral sequence converges as it is eventually constant. Moreover, this boundedness ensures that for any $p,q$, we have 
\[
E_\infty^{p,q}(\pi_*^0\cI)=E_\infty^{p,q}( \tau^{\geq  l}\pi_*^0\cI)
\]
for some $ l< p$. Here $ \tau^{\geq  l}\pi_*^0\cI$ denotes the double complex obtained from $\pi_*^0\cI$ by deleting all columns to the left of the $ l^{th}$ column. This truncated complex is the image under $\pi_*^0$ of a Cartan-Eilenberg resolution of the bounded-below complex $ \tau^{\geq  l}\cA$, so its horizontal-vertical spectral sequence converges to $R^{p+q}\pi_*\left({} \tau^{\geq  l}\cA\right)$. But $ \tau^{\geq  l}\cA$ is a bounded-below complex of $\pi_*$-acyclic objects, so we have 
\[
R^{p+q}\pi_*\left({} \tau^{\geq  l}\cA\right)=H^{p+q}\pi_*^0\left({{} \tau}^{\geq  l}\cA\right)=H^{p+q} \left( \tau^{\geq  l}(B\otimes_{\zvp}\liet)\right).
\]This coincides with 
\begin{align*}
H^{p+q}(B\otimes_{\zvp}\liet)=T^{p+q}\Pi_*\F
\end{align*}
in all degrees greater than $ l$, as required. 
\end{proof}

We will denote this spectral sequence $E^{\bullet,\bullet}_\bullet(\F):=\{E^{p,q}_n(\F)\}$. This is a slight abuse of notation since the pages $E_0, E_1$ depend on choice of Cartan-Eilenberg resolution $\cI^{\bullet,\bullet}$, though the $E_2$-page is independent (up to isomorphism) of this choice. As usual with spectral sequences, $E^{\bullet,\bullet}_n(\F)$ is a bi-graded dg-module for $E^{\bullet,\bullet}_n(\underline{\bO})$ for each $n\geq0$. 

An important example is when $H^\bullet(Y;\ff)$ is concentrated in even degrees. In this case, the module structure is easy to describe on the $E_2$-page. Indeed, the $E_2$-page of the spectral sequence associate to the constant sheaf $\underline{\bO}$ is concentrated in even bi-degrees and so has collapsed, giving the bi-graded algebra 
\[
E_2^{\bullet,\bullet}(\underline{\bO})=H^\bullet(Y;\ff)[t,t^{-1}]
\]
with $t$ in bi-degree $(2,0)$ and $H^\bullet(Y;\ff)$ equipped with its usual grading inserted vertically. That is,
\begin{align}\label{nice}
E^{p,q}_2(\underline{\bO})=
\begin{cases}H^q(Y;\ff)t^{p/2}&: p\text{ even,}\\ \qquad 0&: p\text{ odd}.\end{cases}
\end{align}
For any $\F$, $E^{p,q}_2(\F)$ is the horizontally 2-periodic double complex $H^q(Y;T^p \F)$ (with some differentials of bi-degree $(-1,2)$); the action of $H^q(Y;\ff)$ in the columns is just the usual action on cohomology with coefficients in the constructible sheaf $T^p \F$, and the action of $t$ is the obvious $2$-periodic structure.

\subsection{Tate-Support of a sheaf}

An important application of the Tate hypercohomology spectral sequence is the following characterization of when a sheaf is non-zero in $\mathrm{Sh}_c(Y;\T_\infty)$.
\begin{Prop}\label{suppprop}Let $\F$ be an object of $\mathrm{Sh}_c(Y;\T_\infty)$ whose stalks are all $0\in \T_\infty$-$mod_c$. Then $\F=0$.\end{Prop}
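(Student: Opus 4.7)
The plan is to show that $\Hom_{Sh_c(Y;\T_\infty)}(\G,\F) = 0$ for every $\G \in Sh_c(Y;\T_\infty)$; specializing to $\G = \F$ then forces $\mathrm{id}_\F = 0$, hence $\F = 0$.

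For any such $\G$, I would first show that the internal hom $\underline{Hom}(\G,\F)$ has vanishing stalks. Since the six functors descend from $D^b_c(Y;\zvp)$ and satisfy the usual stalk formula for constructible arguments, for every $y \in Y$ one has
$$
i_y^* \underline{Hom}(\G,\F) \cong \underline{Hom}(i_y^*\G,\, i_y^*\F) = 0,
$$
using the hypothesis $i_y^*\F = 0$. Since the Tate cohomology functors $T^0, T^1$ commute with pullback to a point, the ordinary constructible $\bF_p$-sheaves $T^i \underline{Hom}(\G,\F)$ likewise have vanishing stalks, and therefore vanish identically.

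Next, I would apply the Tate hypercohomology spectral sequence of Section \ref{hyperco} to $\underline{Hom}(\G,\F)$. Its $E_2$-page equals $R^\bullet \pi_* \bigl(T^\bullet \underline{Hom}(\G,\F)\bigr) = 0$, so the spectral sequence collapses and gives $T^\bullet \pi_* \underline{Hom}(\G,\F) = 0$. Combining the tensor--hom adjunction (whose monoidal unit is $\pi^*\zp$, as one checks using the paper's definition $x \otimes y = \mathbb{D}\underline{Hom}(x,\mathbb{D}y)$) with the $(\pi^*,\pi_*)$-adjunction and the identification $\Hom_{\T_\infty-mod_c}(\zp, M) = T^0 M$ yields
$$
\Hom_{Sh_c(Y;\T_\infty)}(\G,\F) = T^0 \pi_* \underline{Hom}(\G,\F) = 0,
$$
and taking $\G = \F$ finishes the argument.

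The only technical point meriting care is the stalk-compatibility of $\underline{Hom}$ in the Tate category, but I do not expect it to be a serious obstacle: it is formal, following from the corresponding identity in $D^b_c(Y;\zvp)$ (which holds for constructible $\G$) together with the fact that $\bT^*$ commutes with the six functors. With that in hand, the Tate hypercohomology spectral sequence immediately delivers the conclusion.
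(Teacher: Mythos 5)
There is a genuine gap, and it sits exactly at the point you wave away as ``formal'': the stalk formula $i_y^*\underline{Hom}(\G,\F)\cong\underline{Hom}(i_y^*\G,i_y^*\F)$ is simply false for general constructible complexes, already in $D^b_c(Y;\zvp)$ and hence also after projecting to the Tate category. For a concrete counterexample take $Y$ a disc, $j:U\hra Y$ the complement of the origin, $\G=j_!\epsilon_*\zp_U$ and $\F=\epsilon_*\zp_Y$: then $\underline{Hom}(\G,\F)\cong j_*\epsilon_*\zp_U$, whose stalk at the origin is the cohomology of the link $S^1$ (nonzero, with trivial $\vp$-action, hence nonzero in $\T_\infty-mod_c$), while $\underline{Hom}(i_0^*\G,i_0^*\F)=\underline{Hom}(0,\cdot)=0$. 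The identity that does hold in general is the costalk formula $i_y^!\underline{Hom}(\G,\F)\cong\underline{Hom}(i_y^*\G,i_y^!\F)$, which controls the costalks of the internal Hom in terms of the \emph{costalks} of $\F$ --- but your hypothesis is about the stalks of $\F$, so the vanishing you need of the Tate cohomology sheaves $T^\bullet\underline{Hom}(\G,\F)$ (which are computed stalkwise) does not follow from the hypothesis by a pointwise argument.

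This is precisely why the paper's proof begins with the recollement triangles $j_!j^!\to id\to i_*i^*$, which preserve the vanishing-of-stalks hypothesis and reduce to the case where $Y$ is smooth and $\F$ is constructible with respect to the trivial stratification. In that lisse situation stalks and costalks agree up to shift, so the (correct) costalk formula does give that the stalks of $\underline{Hom}(\F,\F)$ vanish, and then your remaining steps --- vanishing of $T^\bullet$, collapse of the Tate hypercohomology spectral sequence, and the identification $\Hom_{Sh_c(Y;\T_\infty)}(\G,\F)=T^0\pi_*\underline{Hom}(\G,\F)$ via adjunction --- go through exactly as you (and the paper) say, with $\G=\F$ yielding $End(\F)=0$ and hence $\F=0$. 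So the back end of your argument coincides with the paper's; what is missing is the stratification-induction front end, and without it the key vanishing of $T^\bullet\underline{Hom}(\G,\F)$ is unjustified.
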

\begin{proof}Using the `stalk-preserving' triangles
\begin{align*}
j_!j^!\F\to \F\to i_*i^*\F\xrightarrow{+1}
\end{align*}
where $j:Y_\lam\hra Y$ is the inclusion of an open stratum and $i:\hra Y$ is the closed embedding of the compliment, we may reduce by induction to the case where $Y$ is smooth and $\F$ (or rather, an object of $D^b_c(Y;\zvp)$ underlying $\F$) is constructible with respect to the trivial stratification. It is enough then to show that $\End_{\mathrm{Sh}_c(Y;\T_\infty)}(\F)=0$. But we have:
\begin{align*}
\End_{\mathrm{Sh}_c(Y;\T_\infty)}(\F) = \Hom_{\mathrm{Sh}_c(Y;\T_\infty)}(\bT\underline{\bO},\underline{\Hom}(\F,\F))=\mathrm{TH}^0\underline{\Hom}(\F,\F).
\end{align*}

In general, the costalks of $\underline{\Hom}(\F,\G)$ are all $0$ if the stalks $\F$ or the costalks of $\G$ are. Since $Y$ is smooth, costalks and stalks coincide up to shift on constructible complexes. Therefore, our assumption implies that the stalks of $\underline{\Hom}(\F,\F)$ are all $0$ in the Tate category. In particular, the result follows from showing that $\mathrm{TH}^0\F=0$ whenever the stalks of $\F$ are all $0$. But taking stalks commutes with taking Tate cohomology sheaves, so that the Tate cohomology sheaves of $\F$ must also be $0$. In particular, the Tate hypercohomology spectral sequence vanishes term-wise on the $E_2$-page so that the result follows from Lemma \ref{Lem: spectral}.\end{proof}

For an object $\F$ of ${\mathrm{Sh}_c(Y;\T_\infty)}$, we follow the formalism of \cite{JMW1} and define the \emph{Tate support} of $\cF$, written $\supp_\T(\F)$, to be \emph{closure} of the set of points $i_y:\{y\}\to Y$ such that $i_y^*\F$ is non-zero as an object of $\T_\infty\modc$.

\begin{Rem}
This is in general smaller than the support of a complex representing $Y$. For example, the constant sheaf $\underline{\zvp}$ has trivial Tate support. In general, it is a closed union of strata (taken from any stratification along which $\F$ is smooth). Indeed, the Tate cohomology functors commute with $i_y^*$, so that $\supp_\T(\F)=\supp(T^0\F)\cup\supp(T^1\F)$. 
\end{Rem} Proposition \ref{suppprop} implies that this is a reasonable notion of support: 

\begin{Cor}For any complex $\F$ in $\mathrm{Sh}_c(Y;\T_\infty)$, let $i:\supp_\T(\F)\to Y$ be the closed embedding. The adjunction map
\begin{align*}
\F\to i_*i^*\F
\end{align*}
is an isomorphism.\end{Cor}
\begin{proof}The cone of the adjunction map is $j_!j^*\F$, where $j$ is the inclusion of the open set $Y\backslash \supp_\T(\F)$. The stalks of this are all $0$, so it is $0$ also.\end{proof}

In particular, any complex in $\mathrm{Sh}_c(Y;\T_\infty)$ is isomorphic in $\mathrm{Sh}_c(Y;\T_\infty)$ to one whose Tate support is equal to its usual support in the derived category. A nice corollary is that we may clear up the apparent disparity, mentioned in Remark \ref{broker?}, between our definition of the constructible Tate category and that of \cite{Tr}:

\begin{Cor}\label{fixer!}Let $\F$ be an object of $D^b_c(Y;\zvp)$ such that all the stalks of $\F$ are isomorphic to $0$ in $\T_\infty\modc$. Then $\F$ is a perfect complex.\end{Cor}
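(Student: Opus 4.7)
The plan is to reduce this immediately to Proposition \ref{suppprop} by passing to the Tate category. Since all six functors (in particular $i_y^*$ for a point inclusion $i_y:\{y\}\to Y$) descend compatibly with $\bT^*$, the hypothesis that the stalks $i_y^*\F$ vanish in $\T_\infty-mod_c$ for every $y\in Y$ translates into the statement that $i_y^*(\bT^*\F)=0$ in $\T_\infty-mod_c$ for every $y$. In other words, all stalks of $\bT^*\F$, now regarded as an object of $Sh_c(Y;\T_\infty)$, are zero.

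Next I would invoke Proposition \ref{suppprop}, which says precisely that an object of $Sh_c(Y;\T_\infty)$ with vanishing stalks is itself zero. Thus $\bT^*\F\cong 0$ in $Sh_c(Y;\T_\infty)$.

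Finally, by definition $Sh_c(Y;\T_\infty)$ is the Verdier quotient $D^b_c(Y;\zvp)/\Perf_c(Y;\zvp)$, and the subcategory $\Perf_c(Y;\zvp)$ is thick by construction (it is defined as the thick subcategory generated by sheaves of constructible $\zvp$-modules with weakly injective stalks). For a Verdier quotient by a thick subcategory, an object is sent to zero if and only if it already lies in the subcategory. Hence $\bT^*\F\cong 0$ forces $\F\in\Perf_c(Y;\zvp)$, i.e.\ $\F$ is perfect.

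There is no substantive obstacle here; the content of the corollary is entirely packaged into Proposition \ref{suppprop}. The only points requiring a brief remark are (i) that $i_y^*$ descends through $\bT^*$, which follows from the fact that stalks preserve perfect complexes (and is subsumed in the general discussion of the six functors), and (ii) the thickness of $\Perf_c$, which is true by definition.
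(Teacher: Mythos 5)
Your proof is correct and follows essentially the same route as the paper: both reduce to Proposition \ref{suppprop} (vanishing stalks in the Tate category imply $\bT^*\F=0$) and then use that the kernel of the Verdier quotient by the thick subcategory $\Perf_c(Y;\zvp)$ is exactly that subcategory. The paper phrases the first step via the (empty) Tate support, but that is just Proposition \ref{suppprop} repackaged, so there is no substantive difference.
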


\begin{proof}Indeed, in that case $\bT^*\F$ has empty Tate support, so is $0$. But the kernel of $\bT^*$ is precisely the category of perfect complexes.\end{proof}

\subsection{Equivariance}If $G$ is an algebraic group acting on $Y$ and with $\vp\subset Z(G)$ lies in the center of $G$, one may analogously define $\mathrm{Sh}_{G,c}(Y;\T_\infty)$ as a localization of the bounded constructible equivariant derived category $D^b_{G,c}(Y;\bO)=D^b_{G,c}(Y;\zvp)$. One may also work with a fixed stratification $S$ which is compatible with the $G$-action, just as in the non-equivariant case. The six functor formalism also carries over, and we are able to define cohomological functors
\begin{align*}
T^0,T^1:\mathrm{Sh}_{G,c}(Y;\T_\infty)\to \mathrm{Sh}_{G,c}(Y;\ff)
\end{align*}
in essentially the same way. However, we are faced with certain technical difficulties in this situation which we do not yet know how to resolve; see Remark \ref{crazy}. For that reason, we will not pursue the $G$-equivariant situation.


\section{Tate-parity sheaves}\label{Section: Tate-parity}

We now develop a theory of parity sheaves in the integral Tate category. We continue with our assumption that $Y$ is a complex algebraic variety endowed with a trivial $\vp$-action. Additionally, we fix a stratification $S$ of $Y$ and assume that for each $\lam\in S$ the stratum $Y_\lam$ is simply connected. 

\subsection{Parity sheaves}\label{Section: party cond}
 We begin by recalling the classical notion from \cite{JMW1} and let $k$ be a complete local PID. Let $D^b_{S}(Y;k)$ denote the bounded $S$-constructible derived category. 
We fix a map of sets
\begin{align*}
\dagger:S\to\zz/2\zz,
\end{align*}
which is known as a \emph{pariversity}. While the notion of a parity sheaf depends on a choice of pariversity, we usually omit $\dagger$ from our notation. 

For $\lambda\in S$ we write $i_\lambda: Y_\lambda\to Y$ for the inclusion of the corresponding stratum in $Y$. 
\begin{Def}
A sheaf $\cF$ in $D^b_{S}(Y;k)$ is said to be ($\dagger$-) \emph{even} if the cohomology sheaves 
$H^n(i_\lambda^?\cF)$ have $k$-free stalks and vanish in degrees $n\in\dagger(\lambda)+1$. Here, $?$ is either $\ast$ or $!$. Likewise, $\cF$ is ($\dagger$-)\emph{odd} if $\cF[1]$ is ($\dagger$-) even.
\end{Def}
A sheaf $\cF$ is said to be a \emph{parity complex} if it is isomorphic to a direct sum of even and odd complexes. All of these properties are inherited by direct summands.

Recall that an additive category is \emph{Krull-Remak-Schmidt} if every object is isomorphic to a finite direct sum of objects, each of which has local endomorphism ring. In such a category, all idempotents split and any object admits a unique decomposition into indecomposable objects. Moreover, an object is indecomposable if and only if its endomorphism ring is local. Our assumptions on $k$ imply that the category $D^b_{S}(Y;k)$ is Krull-Remak-Schmidt. In particular, every parity complex is a direct sum of indecomposable parity complexes.

\subsection{Parity conditions}The theory of parity sheaves works best when $S$ is a JMW stratification, meaning a Whitney stratification such that the following condition holds:
\begin{Cond}\label{parcon}For any $\lambda\in S$, the cohomology 
\[
H^n(Y_\lam,k)=\Hom_{D^b_{S}(Y_\lambda;k)}(\underline{k},\underline{k}[n])
\]
is a free $k$-module for all integers $n$ and vanishes when $n$ is odd.

 Under this assumption, a complex $\cF$ in $D^b_{S}(Y;k)$ is even if and only if for each $\lam\in S$, $i_\lambda^?\cF$ is a direct sum of even shifts of $k$-free constant sheaves for both $?=*,!$ \cite[Remark 2.5]{JMW1}.\end{Cond}

The following result classifies indecomposable parity complexes:

\begin{Prop}\cite[Theorem 2.12]{JMW1}\label{Prop: parity uniqueness}
Let $\F$ be an indecomposable parity complex. Then
\begin{enumerate}
\item\label{xx1}
The support of $\F$ is irreducible, hence of the form $\overline{Y_\lam}$ for some $\lam\in S$.
\item\label{xx2}
The restriction $i_\lam^*\F$ is isomorphic to $\underline{k}[m]$, for some integer $m$.
\item\label{xx3}
Any indecomposable parity complex supported on $\overline{Y_\lam}$ and extending $\underline{k}[m]$ is isomorphic to $\F$.
\end{enumerate}
\end{Prop}

The indecomposable parity complex $\cF$ is called a \emph{parity sheaf} if $m=\dim_\cc(Y_\lam)=:d_\lambda$ in the above proposition. When such a complex exists, the proposition implies it is unique up to isomorphism and we denote it by $\mathcal{E}(\lam, k)$. The parity sheaf $\mathcal{E}(\lam, k)$ may or may not exist, depending on the situation; see \cite[Section 4]{JMW1}.

\subsection{Tate-parity complexes} We continue to assume our stratification $S$ of $Y$ satisfies Condition \ref{parcon}. Let $k=\bO.$ Our main definition is the following:

\begin{Def}\label{Def: Tate parity} Let $\F$ be an object in $\mathrm{Sh}_{S}(Y;\T_\infty)$. Let $?=*$ or $!$.
\begin{enumerate}
\item $\F$ is $?$-\emph{Tate-even} if for each $\lam\in S$,
\begin{align*}
T^1(i^{?}_\lam\F[\dagger(\lambda)]) = 0.
\end{align*}\
\item $\F$ is $?$-\emph{Tate-odd} if $\F[1]$ is $?$-Tate-even.\
\item $\F$ is \emph{Tate-even} (resp. \emph{Tate-odd}) if $\F$ is $?$-Tate-even (resp. -odd) for both $?=*$ and $?=!$.\
\item $\F$ is \emph{Tate-parity} if $\F$ is isomorphic in $\mathrm{Sh}_{S}(Y;\T_\infty)$ to the direct sum of a Tate-even and a Tate-odd object.\
\end{enumerate}
\end{Def}

The relationship between Tate-parity complexes and parity complexes is somewhat subtle. For instance, consider the case where $Y$ consists of a single point. Then any free $\bO$-module $M$ with trivial $\vp$-action is Tate-even, but $\zvp$-modules of the form $(\zvp/N)^n$ are Tate-odd, where $N=\sum_ig^i$ is the norm element.

\begin{Rem}Remark \ref{Rem: modular coefficients} shows that a non-zero complex of $\ff$-modules \emph{can} be Tate-parity, contrary to the fact that it can never be parity in $D^b_{S}(Y;\bO)$. Indeed,  Lemma \ref{allparity} implies that every object in \[
\mathrm{Sh}_{c}(\{pt\};\T_\infty)=\T_\infty\modc
\]
is Tate-parity. However,  a non-zero complex of $\ff$-modules will generally be neither Tate-even nor Tate-odd, so in particular will not be indecomposable.\end{Rem}

\subsection{Tate-parity sheaves}\label{Section: tateparitysheaves}Let  
\[
\mathrm{Par}_{S}(Y;\T_\infty)\subset\mathrm{Sh}_S(Y;\T_\infty)
\]denote the full additive subcategory spanned by Tate-parity complexes. We show below that this category is Krull-Remak-Schmidt in the sense mentioned in Section \ref{Section: party cond}. In particular, every Tate-parity complex is a sum of indecomposable Tate-parity complexes in a unique way. 

In this direction, let us assume that $Y$ consists of a single stratum. We have the following analogue of Proposition \ref{Prop: parity uniqueness}:

\begin{Thm}\label{megathm}Let $\F$ be a Tate-parity complex in $\mathrm{Sh}_S(Y;\T_\infty)$ and suppose $Y$ is simply connected. Then $\F\cong\underline{M}\;$ for some $M$ in $\T_\infty\modc$.\end{Thm}

\begin{proof}We may assume that $\cF$ is Tate-even. Recalling Corollary \ref{Prop: Grothendieck}, the claim is that there exists a $d$ such that $\F \cong \underline{\bO}^d$ in $\mathrm{Sh}_S(Y;\T_\infty)$. We show this using the Tate hypercohomology spectral sequence. Making use of the notations from Section \ref{hyperco}, we fix our dg-algebra $E$ in $D^b_c(\zvp)$. We note the similarity with the argument below and the proof of Lemma \ref{allparity}.

Since $\F$ is Tate-even, the Tate hypercohomology spectral sequence $E^{\bullet,\bullet}_2(\F)$ is concentrated in even bi-degrees, forcing it to collapse on this page. 
Additionally, the assumption that $Y$ satisfies the JWM condition \ref{parcon} implies that $E^{\bullet,\bullet}_2(\underline{\bO})$ collapses on the $E_2$-page. 
Given that $Y$ is simply connected, the $E^{\bullet,\bullet}_2(\underline{\bO})$-module $E^{\bullet,\bullet}_2(\F)$ is free. As both bi-gradings are concentrated in even bi-degrees, we may take generators in bi-degree $(0,0)$, obtaining an isomorphism
\begin{equation}\label{eqn: first iso}
    \mathrm{TH}^\bullet\F\cong (T^\bullet E)^d.
\end{equation}

First, we claim that this may be lifted to an isomorphism $\Pi_\ast\F\cong E^d$. Recall that in $\T_\infty\modc$, the functor $T^0$ is given by 
\[
T^0(M)=\Hom_{\T_\infty\modc}(\bT^\ast\bO,M),
\] for an object in $\T_\infty\modc$. By the tensor-restriction adjunction between $E\moddg/\Perf_{dg}(E)$ and $\T_\infty\modc$, we see that the functor 
\[
T^0:E\moddg/\Perf_{dg}(E)\lra T^0E\text{-}\mathrm{Mod}_{gr}
\]
is similarly given by $\Hom_{E\moddg/\Perf_{dg}(E)}(E,-)$. Therefore, choosing a basis of cardinality $d$ for $$\mathrm{TH}^0\F=T^0\Pi_*\F =\Hom_{E\moddg/\Perf_{dg}(E)}(E,\Pi_\ast\F)$$ gives a map
\begin{align}\label{second iso}
f:E^d\to \Pi_*\F
\end{align}
in $E\moddg/\Perf_{dg}(E)$ which induces the isomorphism (\ref{eqn: first iso}) after taking $T^\bullet$. Since the restriction functor to $\T_\infty\modc$ reflects isomorphisms (as it is triangulated and kills no objects) and commutes with $T^\bullet$, $f$ must be an isomorphism.

Finally, we claim that $\Pi_*$ is fully faithful. This implies it reflects isomorphisms, so that the isomorphism (\ref{second iso}) lifts to an isomorphism $\F \cong \underline{\bO}^d$ in $\mathrm{Sh}_S(Y;\T_\infty)$. As $\Pi_\ast$ is a triangulated functor, it is enough to check this on the generators $\underline{\bO}$ and $\underline{\bO}[1]$ of $\mathrm{Sh}_S(Y;\T_\infty)$, where we are using our assumption that $Y$ is simply connected. This calculation is essentially contained in what we have written before, combining (\ref{nice}) with Proposition \ref{moddyprop}. \end{proof}

\begin{Rem}\label{crazy}We suspect that if $Y$ is not simply connected, or in the $G$-equivariant setting, then it is true that any indecomposable Tate-parity complex is of the form $\bT\mathcal{L}$, up to shift, for some indecomposable $\bO$-free local system $\mathcal{L}$. 

The above proof does not work if $Y$ is not simply connected, because $\Pi_*$ is not fully faithful. We also have difficulties when $G$ is, say, an algebraic torus of positive dimension: if we try to run the above argument with $Y=BG$, we are immediately stuck because we cannot take bounded $\pi_*$-acyclic resolutions as was done in the construction of the Tate hypercohomology spectral sequence. Hopefully these issues are not essential.
\end{Rem}

\subsection{JMW redux}\label{Section: JWM redux}
We now return to the setting of a complex variety $Y$ equipped with a stratification $S$ satisfying Condition \ref{parcon}, and consider the trivial $\vp$-action on $Y$. The category $\mathrm{Par}_{S}(Y;\T_\infty)$ enjoys many of the same properties as the category of parity complexes. Using Theorem \ref{megathm}, many of the arguments are formally identical to those of \cite{JMW1}, and in these instances we omit the details.


\begin{Prop}\label{Prop: Hom-sets}
Suppose that $\F$ is $*$-Tate-parity and $\G$ is $!$-Tate-parity. Then there is a non-canonical isomorphism of $\ff$-vector spaces
\begin{align*}
\Hom_{\mathrm{Sh}_S(Y;\T_\infty)}(\F,\G) \cong \bigoplus_{\lam\in S}\Hom_{\mathrm{Sh}_S(Y;\T_\infty)}(i_\lam^\ast\F,i_\lam^!\G).
\end{align*}
In particular, if $\F$ is $*$-Tate-even and $\G$ is $!$-Tate-odd then $\Hom_{\mathrm{Sh}_S(Y;\T_\infty)}(\F,\G)=0$.
\end{Prop}
\begin{proof}
The proof is identical to that of \cite[Prop. 2.6]{JMW1}, using the calculations of Theorem \ref{megathm}.\end{proof}

We may now show that $\mathrm{Par}_{S}(Y;\T_\infty)$ is Krull-Remak-Schmidt.

\begin{Prop}\label{prop: krull} The categories of $*$-Tate-even complexes and $!$-Tate-even complexes are both Krull-Remak-Schmidt. In particular, the category of Tate-parity complexes $\mathrm{Par}_{S}(Y;\T_\infty)$ is Krull-Remak-Schmidt.\end{Prop}

\begin{proof}It suffices to show that a $*$-Tate-even complex $\F$ in $\mathrm{Sh}_S(Y;\T_\infty)$ splits as a direct sum of objects of $\mathrm{Sh}_S(Y;\T_\infty)$, each of which has a local endomorphism ring. Any such summand will automatically be $*$-Tate-even; the same holds for Tate-odd. 

Since endomorphism rings of objects of $\mathrm{Sh}_S(Y;\T_\infty)$ are all finite-dimensional $\ff$-vector spaces, it is enough to show that any idempotent endomorphism of a $*$-Tate-even complex is split. In \cite{LC}, it is shown that in any triangulated category an idempotent endomorphism of a distinguished triangle which splits on any two terms splits on the third. For any stratum $Y_\lam$ open in the Tate support of $\F$, if we set $j_\lam$ to be the inclusion and $i_\lam$ to be the inclusion of the compliment, we may use the functorial distinguished triangle
\begin{align*}
j_{\lam,!}j^!_\lam\F\to \F\to i_{\lam,*}i^*_\lam\F\xrightarrow{+1}
\end{align*}
which preserves the $*$-Tate-even property, to reduce by induction on the dimension of $Y$ to the case of a single connected stratum. By Theorem \ref{megathm}, a Tate-even complex on a single stratum $Y$ is a direct sum of copies of $\underline{\bO}$, whose endomorphism algebra is the local ring $H^\bullet(Y;\ff)$.

Arguing in a similar fashion with the dual functorial distinguished triangles (preserving the $!$-Tate-even property), any idempotent endomorphism of a $!$-Tate-even complex is also split. Since there are no maps between Tate-even and Tate-odd complexes by Proposition \ref{Prop: Hom-sets}, the splitting of idempotent endomorphisms of Tate-parity complexes follows from the Tate-even case, which follows in turn from the $*$-Tate even case; that summands of Tate-even complexes are Tate-even is automatic.\end{proof}

Before stating the application of this property to the classification of indecomposable Tate-parity complexes, we collect a few immediate consequences.
\begin{Cor}The category of Tate-parity complexes is closed under $\mathbb{D}$.\end{Cor}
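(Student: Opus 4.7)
The plan is to reduce, using the 2-periodicity of the Tate category, first to a statement about Verdier duality on a single stratum, and thence to a statement over a point.

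First, $\mathbb{D}$ is additive (contravariantly) and satisfies $\mathbb{D}[1]\cong[-1]\mathbb{D}\cong[1]\mathbb{D}$ in view of the isomorphism $[-1]\cong[1]$ coming from Proposition \ref{Prop: two periodic}. It follows that $\mathbb{D}$ sends Tate-odd complexes to Tate-odd complexes if and only if it sends Tate-even complexes to Tate-even complexes, so it suffices to treat the Tate-even case. Fix $\F$ Tate-even. Applying $i_\lambda^*\mathbb{D}\cong\mathbb{D}\, i_\lambda^!$, $i_\lambda^!\mathbb{D}\cong\mathbb{D}\, i_\lambda^*$, $\mathbb{D}[n]\cong[-n]\mathbb{D}$, together with the periodicity $[-2\dagger(\lambda)]\cong[0]$, both $i_\lambda^*\mathbb{D}\F[\dagger(\lambda)]$ and $i_\lambda^!\mathbb{D}\F[\dagger(\lambda)]$ may be rewritten as $\mathbb{D}$ applied to a Tate-even complex on $X_\lambda$. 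The problem thus reduces to showing that Verdier duality on $Sh_S(X_\lambda;\T_\infty)$ preserves the Tate-even condition.

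Since the strata are assumed simply connected, Theorem \ref{megathm} shows that any Tate-even complex on $X_\lambda$ is isomorphic to $\pi^*M$ for some Tate-even $M\in\T_\infty-mod_c$, where $\pi:X_\lambda\to pt$. Since $X_\lambda$ is smooth of complex dimension $d_\lambda$,
\[
\mathbb{D}(\pi^*M)\cong \pi^!(\mathbb{D}M)\cong \pi^*(\mathbb{D}M)[2d_\lambda]\cong \pi^*(\mathbb{D}M)
\]
in the Tate category. So it suffices to show that Verdier duality on $\T_\infty-mod_c$ preserves the Tate-even condition. This follows from Proposition \ref{Prop: Grothendieck}: the equivalence $(T^0,T^1)$ identifies $\T_\infty-mod_c$ with the category of finite-dimensional $\zz/2$-graded $\bF_p$-vector spaces, and the endofunctor $\mathbb{D}$, being a triangulated contravariant involution that fixes the unit $\bT^*\zp$ and commutes with $[1]$ (since $\mathbb{D}[1]\cong[1]\mathbb{D}$), must correspond under this equivalence to the standard $\zz/2$-graded linear duality, which preserves each graded piece.

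The main obstacle I anticipate is purely the parity bookkeeping: each reduction step requires collapsing even shifts via $[2]\cong[0]$ while tracking the parities of $\dagger(\lambda)$ and $d_\lambda$ through the interchange of $i_\lambda^*$ and $i_\lambda^!$. Once these shifts are organized, no further input is needed beyond Theorem \ref{megathm} and Proposition \ref{Prop: Grothendieck}.
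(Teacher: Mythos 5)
Your argument is correct and is essentially the paper's own: the one-line proof in the text likewise reduces via the exchange of $i_\lambda^*$ and $i_\lambda^!$ under $\mathbb{D}$ to the single-stratum description of Theorem \ref{megathm}, where duality visibly preserves Tate-evenness. (Your final step could be stated more directly via Lemma \ref{allparity}(3): a Tate-even object of $\T_\infty-mod_c$ is $\bT^*\zp^k$ and $\mathbb{D}\bT^*\zp\cong\bT^*\zp$, which is all that is needed rather than identifying $\mathbb{D}$ with the standard graded duality.)
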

\begin{proof}Theorem \ref{megathm} shows that $\mathbb{D}$ exchanges $*$-Tate-even and $!$-Tate-even complexes, so sends Tate-even complexes to Tate-even complexes (as required). \end{proof}

 \begin{Cor}\label{Cor: surj}
 Suppose $\F$ and $\G$ are indecomposable Tate-parity complexes of the same parity and let $j: Y_\mu\hra Y$ be the inclusion of a stratum $Y_\mu$ which is open in the support of both $\F$ and $\G$. Then 
 \begin{align*}
 \Hom_{\mathrm{Sh}_S(Y;\T_\infty)}(\F,\G) \twoheadrightarrow \Hom_{\mathrm{Sh}_S(Y;\T_\infty)}(j^\ast\F,j^\ast\G)
 \end{align*}
 is a surjection.
 \end{Cor}
 \begin{proof}
 Let $V = \supp_\T(\F)\cup\supp_\T(\G)$, and let $i: V\setminus{Y_\mu}\hra Y$ be the inclusion of the closed complement. Applying the functor $\Hom_{\mathrm{Sh}_S(Y;\T_\infty)}(\F,-)$ to the triangle 
 \begin{align*}
 i_\ast i^!\G \lra \G\lra j_\ast j^\ast\G\xrightarrow{+1}
 \end{align*}
 and studying the corresponding long exact sequence, we see that the result follows if \begin{align*}\Hom_{\mathrm{Sh}_S(Y;\T_\infty)}(\F,i_\ast i^!\G[1])\cong \Hom_{\mathrm{Sh}_S(Y;\T_\infty)}(i^\ast\F,i^!\G[1])\end{align*} vanishes. But this follows from Proposition \ref{Prop: Hom-sets}.
 \end{proof}
 
\begin{Cor}\label{Prop: indecomp to indecomp}
Suppose $J: U\lra Y$ is the inclusion of an open union of strata. Then for an indecomposable Tate-parity complex $\F$, the restriction of $\F$ to $U$ is either $0$ or indecomposable.
\end{Cor}
 \begin{proof}
 This follows, as in \cite[Prop 2.11]{JMW1}, from Corollary \ref{Cor: surj} and the fact that the category of Tate-parity complexes is Krull-Remak-Schmidt. 
 \end{proof}
The following is the analogue of Proposition \ref{Prop: parity uniqueness} for Tate-parity complexes, which follows readily from the results in this section.
 \begin{Thm}\label{Thm: Tate uniqueness}
 Let $\F$ be an indecomposable Tate-parity complex.
 \begin{enumerate}
 \item The support of $\F$ is of the form $\overline{Y_\lam}$ for a unique stratum $Y_\lam\subset Y$.
 \item Suppose $\G$ and $\F$ are two indecomposable Tate-parity complexes such that 
 \begin{align*}
 {\supp_\T(\G) =\supp_\T(\F)}.
 \end{align*}
 Let $j:Y_\lam\hra Y$ be the inclusion of the unique stratum open in this support. If $j^\ast \G \cong j^\ast \F$, then $\G\cong \F$.
 \end{enumerate}
 \end{Thm}
 \begin{proof}Given the preceding results, the argument is now identical to the proof of \cite[Thm 2.12]{JMW1}.\end{proof} 

This motivates the following definition.


\begin{Def} A complex $\F$ in $\mathrm{Par}_S(Y;\T_\infty)$ is a \emph{Tate-parity sheaf} if it is an indecomposable Tate-parity complex and its restriction to the unique stratum $Y_\lambda$ which is dense in its support is of the form $\bT (\underline{\bO}[d_\lambda])$ (see (\ref{eqn: tate base change})) where $d_\lambda$ is the complex dimension of $Y_\lam$. 
\end{Def}

The previous theorem implies that if such an $\cF$ exists then it is unique up to isomorphism, and will be denoted $\mathcal{E}_\T(\lambda)$.

\subsection{Modular reduction}\label{Section: modular reduction}
 The behavior of parity sheaves under change of base functors is studied in \cite[Section 2]{JMW1}. In the case of modular reduction $\bO\to\ff$, the following results are established.
 \begin{Prop}\label{Prop: base change}
 Let $?=*,!$. A complex $\F\in D^b_S(X;\bO)$ is $?$-even (resp. $?$-odd) if and only if $\ff(\F)\in D^b_S(X;\ff)$ is $?$-even (resp. $?$-odd).
 \end{Prop}
This leads to a correspondence between parity sheaves with $\bO$- and $\ff$-coefficients:
 \begin{Prop}\label{Prop: mod p}
 If $\mathcal{E}\in D^b(X;\bO)$ is a parity sheaf, then $\ff\mathcal{E}$ is a parity sheaf. In fact,
 \[
\ff \mathcal{E}(\lam, \bO) \cong \mathcal{E}(\lam, \ff).
 \]
 \end{Prop}
We now prove analogues of Propositions \ref{Prop: base change} and \ref{Prop: mod p} for Tate-parity sheaves. In this setting, the relevant functor is the {Tate extension of scalars functor} $\bT=\bT^* \epsilon^*$ from Section \ref{Sec: scalars}.

 \begin{Prop}\label{Prop: Tate base change}
Let $?=*,!$. If $\F$ is a $?$-even (resp. $?$-odd) complex in $D^b_S(Y;\bO)$, then $\bT\F$ is $?$-Tate-even (resp. $?$-Tate-odd).
 \end{Prop}
\begin{proof}
Since $\bT$ is compatible with $i_\lambda^?$, a standard d\'{e}vissage argument reduces the statement to a single stratum. In this case, we have
\begin{align*}
T^i\bT\underline{\bO}=\left\{\begin{matrix}
\mathbb{F} & i=0 \\ 
 0& i=1 
\end{matrix}\right.
\end{align*}
by the proof of Lemma \ref{Lem: valued in Fp mod}.\end{proof}

\begin{Thm}\label{sheavesbij}Suppose the parity sheaf $\mathcal{E}=\mathcal{E}(\lambda,\bO)$ exists and satisfies 
\begin{align*}
\Hom_{D^b_c(Y;\bO)}(\mathcal{E},\mathcal{E}[n])=0
\end{align*}
for all $n<0$ (for instance, $\mathcal{E}$ may be perverse). Then:
\begin{align*}
\bT\mathcal{E}=\mathcal{E}_{\T}(\lambda).
\end{align*} \end{Thm}
\begin{proof} As $\bT$ is compatible with $i_\lambda^\ast$, it is enough to show that $\bT\mathcal{E}$ is indecomposable by Theorem \ref{Thm: Tate uniqueness}. Since $\bT\mathcal{E}$ is Tate-parity, we need only show its endomorphism algebra is local. 

By Proposition \ref{moddyprop}, we have
\begin{align*}
\Hom_{\mathrm{Sh}_c(Y;\T_\infty)}(\bT\mathcal{E},\bT\mathcal{E})=\bigoplus_{i\in\zz}\Hom_{D^b_c(Y;\ff)}(\bF\mathcal{E},\bF\mathcal{E}[2i]).
\end{align*}
Since $\mathcal{E}$ is parity, the RHS is equal to 
\begin{align*}
\bigoplus_{i\in\zz}\bF \Hom_{D^b_c(Y;\bO)}(\mathcal{E},\mathcal{E}[2i])=\bigoplus_{i\geq 0}\bF \Hom_{D^b_c(Y;\bO)}(\mathcal{E},\mathcal{E}[2i])
\end{align*}
which is a non-negatively graded algebra whose degree $0$ subalgebra is
\begin{align*}
\bF \Hom_{D^b_c(Y;\bO)}(\mathcal{E},\mathcal{E}).
\end{align*}
This is local, being the quotient of a local ring. But any finite-dimensional non-negatively graded algebra is local if and only if its degree $0$ subalgebra is.\end{proof}

The following is immediate:

\begin{Cor}\label{superbij}Suppose all parity sheaves $\mathcal{E}(\lambda,\bO)$ exist and satisfy 
\begin{align*}
\Hom_{D^b_c(Y;\bO)}(\mathcal{E}(\lambda,\bO),\mathcal{E}(\lambda,\bO)[n])=0
\end{align*}
for all $n<0$ (this holds, for instance, if all the parity sheaves are perverse). Then all Tate-parity sheaves exist and $\bT$ induces a bijection between parity sheaves and Tate-parity sheaves.\end{Cor}

For the remainder of the section, we assume we are in the setting of Corollary \ref{superbij}. It follows that every Tate-parity complex may be uniquely written as a direct sum of objects of the form $\mathcal{E}_{\T}(\lambda)$ and $\mathcal{E}_{\T}(\lambda)[1]$.

For simplicity, let us assume that $d_\lam=\dim_\cc(Y_\lam)$ is even for each $\lam\in S$. Recalling Proposition \ref{moddyprop} and Section \ref{addendum}, we have a factorization
\begin{align*}
\ff: \mathrm{Par}_{S}^0(Y;\bO)\xrightarrow{\bT}\mathrm{Par}_{S}^0(Y;\T_\infty)\xrightarrow{L}\mathrm{Par}_{S}^0(Y;\ff),
\end{align*}
where $\mathrm{Par}_{S}^0(Y;-)$ stands for the category of \emph{normal} (Tate-) parity complexes. Here a (Tate-) parity complex is said to be normal if it is a direct sum of (Tate-) parity sheaves without any shifts. More precisely, for parity sheaves $\cE,\cE'$ we have 
\begin{align*}
\Hom_{\mathrm{Sh}_S(Y;\T_\infty)}(\bT\cE, \bT\cE')=\bigoplus_{i\geq0}\Hom_{D^b_c(Y;\ff)}(\bF\cE,\bF\cE'[2i])
\end{align*}
which admits as a quotient $\Hom_{D^b_c(Y;\ff)}(\bF\cE,\bF\cE')$. The quotient map is compatible with compositions, so that we have a well-defined functor $L$ which takes the object 
\begin{align*}
\cE_{\T}(\lambda,\ff)=\bT\cE(\lambda,\bO)
\end{align*}
to 
\begin{align*}
\cE(\lambda,\bF)=\bF\cE(\lambda,\bO).
\end{align*}
We record this as:

\begin{Thm}\label{lifting}Suppose all parity sheaves $\mathcal{E}(\lambda,\bO)$ exist and satisfy 
\begin{align*}
\Hom_{D^b_c(Y;\bO)}(\mathcal{E}(\lambda,\bO),\mathcal{E}(\lambda',\bO)[n])=0
\end{align*}
for all $n<0$. Then there is a functor $L:\mathrm{Par}_{S}^0(Y;\T_\infty)\to \mathrm{Par}_{S}^0(Y;\ff)$ such that the composition
\begin{align*}
\mathrm{Par}_{S}^0(Y;\bO)\xrightarrow{\bT}\mathrm{Par}_{S}^0(Y;\T_\infty)\xrightarrow{L}\mathrm{Par}_{S}^0(Y;\ff)
\end{align*}
is equal to the modular reduction functor $\bF$.\end{Thm}

\subsection{A technical remark}It may happen that there exists a normalization of the indecomposable parity complexes different from the normalization by parity sheaves which still satisfies the condition of Theorem \ref{sheavesbij}. Additionally, when the assumption that the strata are even dimensional is dropped, certain shifts are necessary. In such cases, the results of the previous section hold just the same, since we never used the value $d_\lambda$ in any technical way - it is there simply so that the phrase ``for instance, if all the parity sheaves are perverse" makes sense. 

More generally, write $n$ for the difference between this normalization and the normalization by parity sheaves, and write $\mathrm{Par}_{S}^n(Y;\bO)$, $\mathrm{Par}_{S}^n(Y;\ff)$, $\mathrm{Par}_{S}^n(Y;\T_\infty)$ for the corresponding categories. For simplicity of exposition, we will consider only $n=0$ (that is, the ``normal'' case), but the reader may trivially extend our results to the case of general $n$. We will return to this notation in Section \ref{liftingsection}.

\section{Smith theory and parity sheaves}\label{Section: Smith}

Now consider a complex algebraic variety $X$ with an action of $\vp$, and write $i:X^\vp\to X$ for the inclusion of the fixed-point subvariety. In this section, we develop the basics of Treumann's \emph{Smith theory for sheaves} in the integral context and study its compatibility with parity objects.

We consider the two functors 
\begin{align*}
i^!,i^*:D^b_{\vp,c}(X;\bO)\to D^b_{\vp,c}(X^\vp;\bO).
\end{align*}
We have:

\begin{Lem}\label{Lem: cone perfect}The cone of the natural map
\begin{align*}
i^!\to i^*
\end{align*}
is contained in $\mathrm{Perf}_c(X^\vp;\bO)$.\end{Lem}

\begin{proof}
The proof is exactly the same as the one given in \cite[Theorem 4.7]{Tr} in the $\ff$-case. For the convenience of the reader, we sketch the argument here. 

For any $\F$ in $D^b_{\vp,c}(X;\bO)$, set 
\[
C(\F)=\mathrm{cone}\left(i^!\F\lra i^\ast\F\right).
\]
Corollary \ref{fixer!} implies that it suffices to show that the stalks of $C(\F)$ all lie in $\Perf(\zvp)$. Let $x\in X^\vp$ be a fixed point, and let $U$ be a regular neighborhood of $x$, chosen to be $\vp$-invariant. Set $L= U-(U\cap X^\vp)$. Then $i^\ast_x(C(\F))$ is quasi-isomorphic to the the complex $\pi_\ast(\F|_L)$ computing the cohomology of $L$ with values in $\F|_L$. Lemma 4.1 in \cite{Tr} now implies that the freeness of the $\vp$-action on $L$ forces this complex to lie in $\Perf(\zvp).$
\end{proof}

It follows that the two functors 
\begin{align*}
\bT^*i^!,\bT^*i^*:D^b_{\vp,c}(X;\bO)\to \mathrm{Sh}_c(X^\vp;\T_\infty)
\end{align*}
are naturally isomorphic. We call `the' resulting functor the \emph{Smith functor}:
\begin{align*}
\Psm:D^b_{\vp,c}(X;\bO)\xrightarrow{i^\ast}D^b_{\vp,c}(X^\vp;\bO)\cong D^b_{c}(X^\vp;\zvp)\xrightarrow{\bT^\ast} \mathrm{Sh}_c(X^\vp;\T_\infty).
\end{align*}
By functoriality, it induces maps 
\begin{align*}
\Psm: \bigoplus_{k\geq0}\Hom_{D^b_{\vp,c}(X;\bO)}(\F,\G[2i])\to \Hom_{\mathrm{Sh}_c(X^\vp;\T_\infty)}(\Psm(\F),\Psm(\G))
\end{align*}
compatible with compositions. In particular, when $\F=\G$ one obtains a map of rings from the `even extension' algebra to the endomorphism algebra of $\Psm(\F)$.

\begin{Lem}\label{onto!}The map above is surjective.\end{Lem}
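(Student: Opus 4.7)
The plan is to combine Proposition \ref{colimy}, which identifies Homs in the Tate category as colimits of Homs under the 2-periodicity map, with a lifting argument along the recollement for the closed-open decomposition $X = Y \sqcup U$, exploiting the free $\vp$-action on $U := X \setminus Y$. By Proposition \ref{colimy}, any $\psi \in Hom_{Sh_c(Y;\T_\infty)}(\Psm\F, \Psm\G)$ is represented by some $\phi: i^*\F \to i^*\G[2n]$ in $D^b_c(Y;\zvp)$, and post-composing with the 2-periodicity map (which has perfect cone) allows us to take $n \geq 0$ as large as convenient without altering $\psi$. By the $(i^*, i_*)$-adjunction, $\phi$ corresponds to a morphism $\phi': \F \to i_*i^*\G[2n]$ in $D^b_{\vp,c}(X;\zp)$.

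Next, write $j: U \hookrightarrow X$ for the open complement of $Y$ and consider the distinguished triangle
$$
j_!j^*\G \to \G \to i_*i^*\G \xrightarrow{+1}.
$$
Applying $Hom_{\vp,X}(\F, -[2n])$, the obstruction to factoring $\phi'$ as a composition $\F \xrightarrow{\tilde\phi} \G[2n] \to i_*i^*\G[2n]$ lives in $Hom_{\vp,X}(\F, j_!j^*\G[2n+1])$, so it suffices to show this group vanishes for $n$ sufficiently large. To do so, apply $Hom_{\vp,X}(-, j_!j^*\G[2n+1])$ to the analogous triangle for $\F$ to extract the fragment
$$
Hom_{\vp,X}(i_*i^*\F, j_!j^*\G[2n+1]) \to Hom_{\vp,X}(\F, j_!j^*\G[2n+1]) \to Hom_{\vp,X}(j_!j^*\F, j_!j^*\G[2n+1]).
$$

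The rightmost term equals $Hom_{\vp,U}(j^*\F, j^*\G[2n+1])$ by the $(j_!, j^*)$-adjunction; since $\vp$ acts freely on $U$, one has $D^b_{\vp,c}(U;\zp) \cong D^b_c(U/\vp;\zp)$, and on the finite-dimensional variety $U/\vp$ this Hom vanishes for $n$ large. The leftmost term equals $Hom_Y(i^*\F, i^!j_!j^*\G[2n+1])$ by the $(i_*, i^!)$-adjunction; since $i^*j_!j^*\G = 0$, the lemma preceding the statement (the cone of $i^! \to i^*$ is perfect) forces $i^!j_!j^*\G$ to itself be perfect in $D^b_c(Y;\zvp)$, and the argument of Proposition \ref{colimy} shows $Hom(-, \cP[m])$ vanishes for $m$ large against any perfect $\cP$. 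Hence the middle group vanishes for $n$ large, and $\phi'$ lifts to a morphism $\tilde\phi: \F \to \G[2n]$ in $D^b_{\vp,c}(X;\zp)$. A short adjunction chase using $i^*i_* = id$ verifies $i^*\tilde\phi = \phi$, so $\Psm(\tilde\phi) = \bT^*i^*\tilde\phi$ represents $\psi$, proving surjectivity.

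The main obstacle is the vanishing of this obstruction Hom group. The argument is pleasingly asymmetric: the two ends of the long exact sequence vanish for quite different reasons, namely the free $\vp$-action on $U$ (which reduces one side to a finite-dimensional non-equivariant Hom computation) and the perfectness of $i^!j_!$ via Treumann's lemma (which reduces the other side to Homs into a perfect complex). Everything else is formal manipulation with adjunctions and the recollement triangles.
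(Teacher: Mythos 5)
Your argument is correct, but it takes a genuinely different and longer route than the paper's. The paper exploits the fact that $\Psm$ is computed both by $\bT^*i^*$ and by $\bT^*i^!$: a morphism $a:\Psm(\F)\to\Psm(\G)$ is read as a map $\bT^*i^*\F\to\bT^*i^!\G$, hence by Proposition \ref{colimy} comes from some $b:i^*\F\to i^!\G[2i]$ in $D^b_{\vp,c}(Y;\zp)$; the $(i^*,i_*)$-adjunction turns $b$ into $\F\to i_*i^!\G[2i]=i_!i^!\G[2i]$, and composing with the counit of $(i_!,i^!)$ gives $c:\F\to\G[2i]$ with $\Psm(c)=a$ --- no obstruction ever appears, which is precisely Treumann's ``hyperbolic localization'' mechanism for $\Psm$. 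You instead work with the $*$-realization on both sides, so you must kill the obstruction class in $Hom_{D^b_{\vp,c}(X;\zp)}(\F,j_!j^*\G[2n+1])$; your two vanishing inputs are both valid --- freeness of the $\vp$-action on $X\setminus Y$ (using that $p$ is prime), and perfectness of $i^!j_!j^*\G$, which follows from $i^*j_!=0$ together with the lemma on the cone of $i^!\to i^*$, combined with the bounded-invariants claim established inside the proof of Proposition \ref{colimy} --- so your proof goes through. What your route buys is an explicit display of the Smith-theoretic mechanism (the open complement contributes only perfect, hence Tate-negligible, objects), at the cost of the recollement long exact sequence and one mild technical point: rather than invoking an algebraic quotient $U/\vp$ (which need not exist as a variety unless $U$ is, say, quasi-projective), it is cleaner to note that for a free action the equivariant Ext groups are hypercohomology of the finite-dimensional topological quotient with bounded constructible coefficients, and therefore vanish in large degrees.
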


\begin{proof}By Proposition \ref{colimy} and Lemma \ref{Lem: cone perfect}, every map $a$ in $\mathrm{Sh}_c(X^\vp;\T_\infty)$ from $\Psm(\F)$ to $$\Psm(\G)= \bT^*i^\ast\G\cong  \bT^*i^!\G$$ is of the form 
\begin{align*}
\bT^*i^*\F\xrightarrow{\bT^*b} \bT^*i^!\G[2k]\cong \bT^*i^!\G
\end{align*}
for some morphism $b:i^*\F\to i^!\G[2k]$ in $D^b_{\vp,c}(X^\vp;\bO)$. The morphism $b$ determines by the adjunction between $i^*,i_*$ a map
\begin{align*}
\F\to i_*i^!\G[2k]= i_!i^!\G[2k]
\end{align*} 
in $D^b_{\vp,c}(X;\bO)$, which determines by the adjunction between $i_!,i^!$ a map
\begin{align*}
c:\F\to \G[2k]
\end{align*}
in $D^b_{\vp,c}(X;\bO)$. We leave it to the reader to check that $\Psm (c)=a$. \end{proof}

\subsection{Relation to parity sheaves}

Under certain circumstances which occur frequently in practice, the Smith functor $\Psm$ is well adapted to study the relationship between parity complexes on $X$ and Tate-parity complexes on $X^\vp$. We fix a $\vp$-invariant stratification $S$ of $X$. This induces in the natural manner a stratification $S^\vp$ of $X^\vp$. We further assume that both stratifications satisfy the JMW criterion (\ref{parcon}). The arguments of this section do not require the strata of $X^\vp$ to be simply connected, so we do not impose this condition.

An object of the equivariant derived category $D^b_{\vp,S}(X,\bO)$ is said to be even, odd, or parity if its underlying object in $D^b_{S}(X,\bO)$ is. The following is one of our main results:

\begin{Thm}\label{alloallo}
Let $\cE$ be a parity complex in $D_{\vp,S}^b(X;\bO)$ such that for each $\vp$-fixed point $i_x:\{x\}\to X^\vp$ and every $k\in \zz$, the cohomology groups $H^k(i_x^?\cE)$ is trivial as a $\vp$-module for both $?=!,*$. Then $\Psm(\cE)$ is Tate-parity.
\end{Thm}
\begin{proof}
We will show that if $\cE$ is even then $\Psm(\cE)$ is Tate-even. 
For any stratum $X_\lambda$, set $X^\vp_\lam = X^\vp\cap X_\lam$ and write 
\[
i_\lambda:X_\lambda\to X,\quad i_\lambda^\vp:X^\vp_\lambda\to X^\vp
\]for the locally-closed embeddings and $i:X^\vp\to X$. For either $?=*$ or $!$, we have
\begin{align*}
(i_\lambda^\vp)^?\bT^*i^?
=\bT^*(i_\lambda^\vp)^?i^?
=\bT^*(i^\lambda)^?(i_\lambda)^?.
\end{align*}
where $i^\lambda:X^\vp_\lambda\to X_\lambda$ denotes the (regular) embedding. Setting $\Psm_?:=\bT^\ast i^?,$ Lemma \ref{Lem: cone perfect} gives a natural isomorphism $\Psm_!\xrightarrow{\sim}\Psm_\ast$ and the above computation gives
\[
(i_\lambda^\vp)^?\Psm_?=\bT^*(i^\lambda)^?(i_\lambda)^?.
\]
It thus suffices to show that $\bT^*(i^\lambda)^?(i_\lambda)^?$ is Tate-even for all $\lam$ and for both $?=!,\ast$.

For $\cE$ as in the statement of the theorem, $(i_\lambda)^?\cE$ is an even complex on the single stratum $X_\lambda$; without loss of generality we assume $\dagger(\lambda)=0$. We are given that:
\begin{enumerate}
\item $H^k((i_\lambda)^?\cE)$ is a $\bO$-free $\vp$-equivariant local system which vanishes for odd $k$; and\
\item the cohomology modules of the $?$-restriction of $(i_\lambda)^?\cE$ on any point of $X^\vp_\lambda$ are trivial $\vp$-modules.
\end{enumerate}
It is clear that these properties also hold for $(i^\lambda)^\ast(i_\lambda)^\ast\cE$. For complexes constructible along $X_\lambda$, we have a Gysin isomorphism between $(i^\lambda)^*$ and $(i^\lambda)^!$ up to a shift by twice the codimension. In particular, $(i^\lambda)^!(i_\lambda)^!\cE$ also satisfies these two properties. 

Similarly using the Gysin isomorphism arising from the inclusion of any point $x$ in $X^\vp_\lambda$, we conclude that the cohomology sheaves of $(i^\lambda)^?(i_\lambda)^?\cE$ are local systems of free $\bO$-modules with the \emph{trivial} $\vp$-action. By the parity conditions on the stratification, all odd extensions in $D^b_c(X^\vp_\lambda,\zvp)$ between such local systems vanish, so that $(i^\lambda)^?(i_\lambda)^?\cE$ splits as a direct sum of its cohomology sheaves. Applying $\bT^*$ to this evidently gives a Tate-even complex.\end{proof}

The assumptions of the theorem frequently occur in practice. The main situation we have in mind is the following:

\begin{Lem}\label{s1lem}Suppose the $\vp$-action on $X$ extends to an action of $S^1$ such that every connected component of every stratum $X^\vp_\lambda$ contains an $S^1$-fixed point. Then the conditions of Theorem \ref{alloallo} are satisfied whenever the $\vp$-equivariant structure  of $\cE$ comes from an $S^1$-equivariant structure.\end{Lem}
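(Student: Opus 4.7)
The plan is to verify the stalk-triviality condition appearing in Theorem \ref{alloallo}: for every point $x\in Y$ and every $k\in\zz$, the module $H^k(i_x^?\cE)$ is trivial as a $\vp$-module for $?=*,!$ (here $\cE$ being parity is part of the standing hypothesis).

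First I would reduce to stalks at $S^1$-fixed points. Let $X_\lambda$ be the stratum containing $x$, with inclusion $i_\lambda:X_\lambda\hookrightarrow X$. The parity of $\cE$ implies that $i_\lambda^?\cE$ is a direct sum of even shifts of $\zp$-free local systems on $X_\lambda$. The $\vp$-equivariance of $\cE$ endows each cohomology local system $\mathcal{L}:=H^k(i_\lambda^?\cE)|_{Y_\lambda}$ with a $\vp$-action by global automorphisms (since $X_\lambda$ and $Y_\lambda = X_\lambda\cap Y$ are $\vp$-stable). An automorphism of a local system on a connected space is determined by its action on any single stalk, and every connected component of $Y_\lambda$ contains an $S^1$-fixed point by hypothesis, so it suffices to show that the $\vp$-action on $\mathcal{L}_y = H^k(i_y^?\cE)$ is trivial for every $S^1$-fixed point $y\in Y_\lambda$.

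Next I would trivialize the action at such a point $y$. Since $y$ is $S^1$-fixed, the inclusion $i_y:\{y\}\hookrightarrow X$ is $S^1$-equivariant, so $i_y^?\cE$ inherits an $S^1$-equivariant structure and hence so does $\mathcal{L}_y$. Via the standard dictionary between $S^1$-equivariant sheaves on a point and sheaves on $BS^1$, one may view $\mathcal{L}_y$ as the fiber of a local system on $BS^1$; since $BS^1$ is simply connected, this local system is trivial, so the $S^1$-action on $\mathcal{L}_y$ is trivial, and \emph{a fortiori} so is the $\vp$-action.

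The main obstacle, albeit a mild one, is to justify precisely that the $S^1$-equivariant structure on the cohomology module $\mathcal{L}_y$ translates into a genuine action of $S^1$ by $\zp$-linear automorphisms, to which the dictionary above can be applied. Equivalently, one can invoke the fact that $\Aut_\zp(\mathcal{L}_y)$ is a closed subgroup of $GL_n(\zp)$ and is therefore totally disconnected in the $p$-adic topology, so any continuous homomorphism from the connected group $S^1$ must be trivial. Either way, this amounts to unpacking what ``$S^1$-equivariant structure'' means in the relevant equivariant derived category and is not expected to involve any serious new ideas.
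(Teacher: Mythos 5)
Your argument is correct, and its core mechanism is the same as the paper's: at an $S^1$-fixed point the $\vp$-action on the (co)stalk cohomology extends to an action of the connected group $S^1$ and is therefore trivial, and triviality then propagates over each connected component of $Y_\lambda$ because $\vp$ acts by automorphisms of a local system. The one genuine divergence is the costalk case $?=!$. The paper handles it by Verdier duality: $\mathbb{D}\cE$ is again $S^1$-equivariant and parity, its stalks split as their cohomology, and the costalks of $\cE$ are the duals of those stalks, so the $*$-case applied to $\mathbb{D}\cE$ finishes. You instead argue directly, using that $i_y^!$ preserves the $S^1$-equivariant structure when $y$ is $S^1$-fixed. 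This works, but note one point you elide: for $x\in Y_\lambda$ the module $H^k(i_x^!\cE)$ is not literally a stalk of $H^k(i_\lambda^!\cE)$; writing $i_x^\lambda:\{x\}\to X_\lambda$, it is identified with the stalk at $x$ of $H^{k-2d_\lambda}(i_\lambda^!\cE)$ via the Gysin/purity isomorphism $(i_x^\lambda)^!\cong (i_x^\lambda)^*[-2d_\lambda]$ on the smooth stratum, and one must check this identification is $\vp$-equivariant (it is, since $\vp$ acts by complex-algebraic, hence orientation-preserving, automorphisms, so its action on $H^{2d_\lambda}_{\{x\}}(X_\lambda;\zp)$ is trivial). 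This is exactly the Gysin step already used in the proof of Theorem \ref{alloallo}, so it is available, and with it inserted your proof of the $!$-case is complete; your route avoids duality at the cost of this shift bookkeeping, while the paper's dual argument avoids the bookkeeping at the cost of invoking $\mathbb{D}$. Your two justifications of the fixed-point triviality (local systems on $BS^1$ are trivial; $S^1$ is connected while $\Aut_{\zp}(\mathcal{L}_y)$ is totally disconnected) are both fine and indeed supply more detail than the paper, which simply asserts that an action extending to $S^1$ must be trivial.
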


\begin{proof}Let ${X^\vp_\lambda}'$ be a connected component of $X^\vp_\lambda$. The cohomology sheaves of the restriction of $\cE$ to ${X^\vp_\lambda}'$ are local systems of $\zvp$-modules with at least one stalk for which the action of $\vp$ extends to one of $S^1$, and therefore must be trivial. Since ${X^\vp_\lambda}'$ is connected the action of $\vp$ must be trivial on every stalk, which proves the case $?=*$.

For the case $?=!$, we note that $\mathbb{D}\cE$ is also $S^1$-equivariant, so the cohomology modules of its stalks on points of $Y$ again all have the trivial $\vp$-action. Since $\mathbb{D}\cE$ is parity, it follows that these stalks are all isomorphic to their cohomology. Therefore, their Verdier duals, namely the costalks of $\cE$, have the required property.\end{proof}

\subsection{Lifting}\label{liftingsection}
We now combine the results of the previous section and the results of Section \ref{Section: modular reduction}. Suppose that $X$ and $X^\vp$ are as in the previous section. Let $\cE$ be a parity complex in $D^b_{\vp,S}(X;\bO)$ satisfying the conditions of Theorem \ref{alloallo}, so that $\Psm(\cE)$ is a Tate-parity complex. Suppose further that all parity sheaves $\cE(\lambda,\bO)$ exist for $D^b_{S}(X^\vp;\bO)$, and moreover satisfy the conditions of Theorem \ref{sheavesbij}. Corollary \ref{superbij} now implies that $\bT$ induces a bijection
\[
\bT: \{\text{$\bO$-parity sheaves on $X^\vp$}\}\lra \{\text{Tate-parity sheaves on $X^\vp$}\}.
\]In particular, there is a unique parity complex $\F$, up to even shifts in the summands, satisfying the equation
\begin{align*}
\Psm(\cE)\cong\bT\F.
\end{align*} 
Let us denote by $\mathrm{Par}^n_{\vp,S}(X;\bO)$ the full subcategory of $D^b_{\vp,S}(X;\bO)$ spanned by complexes satisfying the conditions of Theorem \ref{alloallo} and whose image under $\Psm$ is contained in $\mathrm{Par}_{S}^0(X^\vp;\T_\infty)$. We may then consider the composition
\begin{align*}
LL:\mathrm{Par}^n_{\vp,S}(X;\bO)&\xrightarrow{\Psm}\mathrm{Par}_{S}^0(X^\vp;\T_\infty)\xrightarrow{L}\mathrm{Par}_{S}^0(X^\vp;\ff)\\
\cE\qquad&\longmapsto \Psm(\cE)\cong\bT\F\longmapsto \quad\ff(\F),
\end{align*}
where $L$ is the functor from Theorem \ref{lifting}. In particular, we obtain a functor between two full subcategories of derived categories, constructed by passing through the Tate category.

\section{A geometric construction of the Frobenius contraction functor}\label{Section: geometric application}
In our final section, we give an application of the above theory to geometric representation theory. We begin by recalling the setting of the geometric Satake equivalence and the related results about parity sheaves. Recalling the Frobenius contraction functor of Gros and Kaneda, we combine our techniques with Treumann's modular Tate category (\cite{Tr}) to give a geometric construction of this functor on the category of spherical perverse sheaves.
\subsection{Geometric Satake equivalence}\label{tiltsec}Let $G$ be a complex algebraic connected reductive group, and let $\calGr_G$ be the affine Grassmannian of $G$. It is an ind-algebraic variety, which admits an action of a certain pro-algebraic group $G(\mathcal{O})$. As a set, $\calGr_G$ is the coset space $G(\mathcal{K})/G(\mathcal{O})$, where $\mathcal{K}=\mathbb{C}((t))$, $\mathcal{O}=\mathbb{C}[[t]]$. The action of $G(\mathcal{O})$ factors locally through an algebraic quotient group, so that its orbits are all simply-connected algebraic subvarieties of $\calGr_G$. Let us fix a maximal torus $T$ of $G$. It is known that the $T$-fixed point set in $\calGr_G$ is equal to the cocharacter lattice $\mathbb{X}_\bullet(T) = \calGr_T$ of $T$. Every $G(\mathcal{O})$-orbit on $\calGr_G$ contains a unique Weyl group-orbit in $\mathbb{X}_\bullet(T)$. If we fix further a Borel subgroup containing $T$, then each such Weyl group-orbit contains a unique dominant cocharacter, so that the $G(\mathcal{O})$-orbits are in bijection with the set $\mathbb{X}_\bullet(T)^+$ of dominant cocharacters:
\begin{align*}
\calGr_G= \bigsqcup_{\lam\in \mathbb{X}_\bullet(T)^+}\calGr^\lam.
\end{align*}
This forms a Whitney stratification, called the \emph{spherical} stratification and denoted $sph$. Note also that $\dim_\cc(\calGr^\lam) = \la2\rho,\lam\ra$. The action of $\mathbb{G}_m$ on $\mathcal{O}$ induces its so-called `loop rotation' action on $\calGr_G$; this action is compatible with the action of $G(\mathcal{O})$, so that the orbits of $G(\mathcal{O})\rtimes\mathbb{G}_m$ on $\calGr_G$ coincide with the spherical strata.

Let $k$ be a Noetherian ring of finite homological dimension. The category 
\begin{align*}
D^b_{G(\mathcal{O})\rtimes\mathbb{G}_m}(\calGr_G;k)
\end{align*}
is equipped with a (convolution) monoidal structure. The geometric Satake equivalence (see \cite{MV}) asserts that its subcategory $\Perv_{G(\mathcal{O})\rtimes\mathbb{G}_m}(\calGr_G;k)$ of perverse sheaves is a monoidal subcategory, is equivalent\footnote{Under the map which forgets the equivariant structure.} to $\Perv_{sph}(\calGr_G;k)$, and that the monoidal functor
\begin{align*}
\Perv_{G(\mathcal{O})\rtimes\mathbb{G}_m}(\calGr_G;k)\to \Perv_{sph}(\calGr_G;k)\xrightarrow{H^*(\calGr_G,-)}k\text{-}\mathrm{Mod}
\end{align*}
integrates to a monoidal equivalence 
\[
\Perv_{G(\mathcal{O})\rtimes\mathbb{G}_m}(\calGr_G;k)\cong \mathrm{Rep}(G_{k}^\vee).
\]
Here $G_{k}^\vee$ denotes the split connected reductive group over $k$ which is Langlands dual to $G$.

Fix a regular dominant cocharacter $\mu$, and let $R^G_T:D^b_{sph}(\calGr_G)\lra D^b_{sph}(\calGr_T)$ denote the corresponding hyperbolic localization functor (see \cite{Br}). Then there is a commutative diagram 
\begin{equation}\label{diagram: restriction}
\begin{tikzcd}
\Perv_{sph}(\calGr_G;k)\ar[r,"\sim"]\ar[d,"R^G_T"]& \mathrm{Rep}(G_{k}^\vee)\ar[d,"Res_{T_{k}^\vee}^{G_{k}^\vee}"]\\
\Perv_{sph}(\calGr_T;k)\ar[r,"\sim"]&\mathrm{Rep}(T_{k}^\vee),
\end{tikzcd}
\end{equation}
 with the caveat that the $\lambda$-weight space will be placed in homological degree $-\langle2\rho,\lambda\rangle$. We remark that the hyperbolic localization is a monoidal functor.
 
 Now assume that $p$ is a good prime for $G$. It is shown in \cite{MR} (extending the original weaker result of \cite{JMW2}) that, when $k=\bO$ or $\ff$, all spherical parity sheaves with respect to the dimension pariversity exist and are perverse, and moreover that for $k=\ff$ the geometric Satake equivalence induces an equivalence
\begin{align*}
\mathrm{Par}^0_{sph}(\calGr_G;\ff)\cong \mathrm{Tilt}(G_{\ff}^\vee),
\end{align*}
where $\mathrm{Par}^0_{sph}(\calGr_G;\ff)$ is the additive category of spherical parity sheaves and the right-hand side denotes the category of tilting modules. We refer the reader to \emph{loc.cit.} for the definition and properties of $\mathrm{Tilt}(G_{\ff}^\vee).$ 
\subsection{The Frobenius contraction functor}
 The category $\mathrm{Rep}_{}(G_{\ff}^\vee)$ has several interesting constructions unique to the modular setting which may often be fruitfully studied via the topology of $\calGr_G$ via the geometric Satake equivalence. An important example is the Frobenius twist functor $\F r$, which takes the $\lam$-weight space of a representation $V$ to the $p\lam$-weight space of $\F r(V)$. Our goal is to show that the Smith functor gives a geometric realization of a right adjoint of a certain $\rho$-shifted version of $\F r(V)$. 
 
 Recently, Gros and Kaneda (\cite{GK0}, \cite{GK}) defined and studied a functor 
 \[
 \mathcal{F}r^{(-1)}:\mathrm{Rep}(G_{\ff}^\vee)\lra \mathrm{Rep}(G_{\ff}^\vee),
 \]called the Frobenius contraction functor; it is a right adjoint functor to the functor $St^{\otimes 2}\otimes\F r(-)$. Here $St=L((p-1)\rho)$ is the Steinberg module, which is irreducible of highest weight $(p-1)\rho$. On an object $V$ in $\mathrm{Rep}_{}(G_{\ff}^\vee)$, the representation $\F r^{(-1)}V$ is determined as a $T_{\ff}^\vee$-module by 
 \[
 \text{$(\F r^{(-1)}V)_\lam = V_{p\lam}$ for each $\lam\in \mathbb{X}^\bullet(T^\vee)$.}
 \]
 The existence and properties of $\F r^{(-1)}$ has several applications to linkage and Frobenius splittings (\cite{A}, \cite{GK0}). Importantly for us, it is known that $\F r^{(-1)}$ takes tilting modules to tilting modules (see \cite[Theorem 3.1]{GK}; also \cite[Corollary 3.7]{A}), so that we obtain a functor
\[
\F r^{(-1)}: \mathrm{Tilt}(G_{\ff}^\vee)\lra \mathrm{Tilt}(G_{\ff}^\vee).
\] 
\subsection{An extension of the Smith functor}\label{Section: Tate perverse}
Our present goal is to use Smith theory to construct an endofunctor 
\[
\Perv_{sph}(\calGr_G;\ff)\lra \Perv_{sph}(\calGr_G;\ff)
\]
which will give a geometric construction of the Frobenius contraction functor of Gros-Kaneda as in Theorem \ref{Thm: intro motivation}. As a first step, we construct a refinement of the localization functor $LL$ from Section \ref{liftingsection} in our present context. 

The equivalence 
\begin{align*}
\Perv_{G(\mathcal{O})\rtimes\mathbb{G}_m}(\calGr_G;\bO) \cong~\Perv_{sph}(\calGr_G;\bO)
\end{align*}
factors through an equivalence
\begin{align*}
\Perv_{\mathbb{G}_m, sph}(\calGr_G;\bO)\cong \Perv_{sph}(\calGr_G;\bO).
\end{align*}
Consider the composition, denoted $M$:
\begin{align*}
\mathrm{Par}^0_{sph}(\calGr_G;\bO)\xhookrightarrow{} \Perv_{sph}(\calGr_G;\bO)\cong \Perv_{\mathbb{G}_m, sph}(\calGr_G;\bO)\to \Perv_{\vp, sph}(\calGr_G;\bO).
\end{align*}
Here the last arrow is the restriction of equivariance to the subgroup $\vp\cong \mu_p\subset\mathbb{G}_m$ of $p^{th}$ roots of unity. 

The fixed point set $\calGr_G^\vp$ is a disjoint union of partial affine flag varieties (see \cite[1.3]{RicheWilliamsonSmith}). Noting that none of the theory developed in this paper is affected by restricting attention to a component of the fixed point variety, we restrict our attention to the component containing $p\mathbb{X}_\bullet(T)$. This component, which we denote by ${}^p\calGr_G$, is easily seen to be isomorphic to $\calGr_G$, embedded in the following way:
\begin{align*}
{}^p\calGr_G= G(\mathbb{C}((t^p)))/G(\mathbb{C}[[t^p]])\xhookrightarrow{}G(\mathbb{C}((t)))/G(\mathbb{C}[[t]])\cong \calGr_G.
\end{align*}
Note that the stratification of the embedded copy of $\calGr_G$ induced by $sph$ is again $sph$, and that for any stratum $Gr^\lam$ of $sph$, its intersection with the embedded copy of $\calGr_G$ is either empty or $\lam = p\mu$ for some cocharacter  $\mu$ and the intersection ${}^p\calGr_G\cap Gr^{\lam} \cong Gr^\mu$ is connected of $1/p^{th}$ the dimension.

\begin{Lem}
The composition $\Psm\circ M$ has image contained in $\mathrm{Par}^0_{ sph}({}^p\calGr_G;\T_\infty)$.
\end{Lem} 
\begin{proof}
The functor $M$ does not change the underlying complex, so it is still parity; since $M$ passes through a restriction of equivariance from $\mathbb{G}_m$ to $\vp$, Lemma \ref{s1lem} implies that the conditions of Proposition \ref{alloallo} hold. It remains to show that the induced functor
\begin{align*}
\mathrm{Par}^0_{sph}(\calGr_G;\bO)\lra \mathrm{Par}_{sph}({}^p\calGr_G;\T_\infty)
\end{align*}
lands inside $\mathrm{Par}_{sph}^0({}^p\calGr_G;\T_\infty)$. This holds because, in the intersection correspondence between strata of ${}^p\calGr_G$ and strata of $\calGr_G$ which have non-empty intersection with this embedded copy, dimensions are constant modulo $2$.
\end{proof}

Thus, we may compose $M$ with the lifting functor $LL$ induced by the Smith functor in Section \ref{liftingsection} and obtain a functor
\begin{align*}
P:\mathrm{Par}^0_{sph}(\calGr_G;\bO)\lra \mathrm{Par}^0_{sph}({}^p\calGr_G;\ff).
\end{align*}
Since the right-hand category is $\ff$-linear, Proposition \ref{Prop: base change} allows us to factor $P$ through a functor
\begin{align*}
\underline{\Psm}:\mathrm{Par}^0_{sph}(\calGr_G;\ff)\lra \mathrm{Par}^0_{sph}({}^p\calGr_G;\ff)\cong\mathrm{Par}^0_{sph}(\calGr_G;\ff).
\end{align*}
On objects, $\underline{\Psm}$ lifts to the corresponding (normal) parity complex with $\bO$-coefficients (see Proposition \ref{Prop: mod p}), then applies the functor $P$. We then identify the component ${}^p\calGr_G$ with $\calGr_G$ to view $\underline{\Psm}$ as an endofunctor.

We now construct an extension of $\underline{\Psm}$ to the full category $\Perv_{sph}(\calGr_G;\ff)$. Using Treumann's modular Tate-category \cite[Section 4]{Tr}, we consider the the modular version of Smith functor 
$$
\Psm_{0}:D^b_{\vp,sph}(\calGr_G,\ff)\lra \mathrm{Sh}_{sph}({}^p\calGr_G, \mathcal{T}_0).
$$
As in the integral case of Section \ref{Section: Smith}, this is given by $\ast$-restricting to ${}^p\calGr_G$, and then localizing. Consider the functor 
\[
 \epsilon_\ast:\mathrm{Sh}_{sph}({}^p\calGr_G, \mathcal{T}_0)\lra D_{sph}({}^p\calGr_G, \ff),
\]
given, on the level of bounded complexes, by tensoring with the Tate complex over $\ff[\vp]$ and taking $\vp$-invariants. The notation is chosen to be consistent with the derived invariants functor from Section \ref{sec:const}.

Despite the unboundedness of the resulting complex, we claim that it is reasonable to take the $0^{th}$ perverse cohomology of the output. Indeed, such complexes have finite dimensional support and the $0^{th}$ perverse cohomology functor factors through a truncation functor $\tau_{\leq -n}\tau_{\geq n}$ for $n>0$ sufficiently large.
Thus, the modular Smith category admits a functor \emph{perverse Tate cohomology}
$$
^pH^0_{Tate}:={}^pH^0\circ  \epsilon_\ast:\mathrm{Sh}_{sph}({}^p\calGr_G, \mathcal{T}_0)\lra \Perv_{sph}({}^p\calGr_G,\ff)\cong\Perv_{sph}(\calGr_G,\ff)
$$
 by taking the $0^{th}$ perverse cohomology of the unbounded complex. Here again, we identify sheaves on ${}^p\calGr_G$ with those on $\calGr_G.$
 

\begin{Prop}\label{Prop: smith extension}
The composition 
$$
{}^pH^0_{Tate}\circ \Psm_{0}:\Perv_{sph}(\calGr_G,\ff)\cong \Perv_{\mathbb{C}^*,sph}(\calGr_G,\ff)\lra\Perv_{sph}(\calGr_G,\ff)
$$
 gives an endofunctor extending the functor
\begin{align*}
\underline{\Psm}:\mathrm{Par}^0_{sph}(\calGr_G;\ff)\lra \mathrm{Par}^0_{sph}(\calGr_G;\ff)
\end{align*}
\end{Prop}

\begin{proof}
By the construction, $\underline{\Psm}$ sits in a commutative diagram
\begin{equation*}
 \begin{tikzcd}
\mathrm{Par}^0_{sph}(\calGr_G;\bO)\ar[r," M"]\ar[d,"\ff"] &\mathrm{Par}^0_{\vp,sph}(\calGr_G;\bO)\ar[r,"\Psm"]&\mathrm{Par}^0_{sph}(\calGr_G;\T_\infty)\ar[d,"L"]\\
\mathrm{Par}^0_{sph}(\calGr_G;\ff)\ar[rr,"\underline{\Psm}"]&&\mathrm{Par}^0_{sph}(\calGr_G;\ff).
\end{tikzcd}
\end{equation*}
Additionally, since the modular reduction functor preserves the thick subcategories of perfect objects, we obtain a functor 
\[
\ff_{Tate}:\mathrm{Sh}_{sph}(\calGr_G;\T_\infty)\lra \mathrm{Sh}_{sph}(\calGr_G;\T_0)
\]
sitting in the commutative diagram
\[
 \begin{tikzcd}
D_{sph}^b(\calGr_G;\bO)\ar[r,"\bT"]\ar[d,"\ff"] &\mathrm{Sh}_{sph}(\calGr_G;\T_\infty)\ar[d,"\ff_{Tate}"]\\
D_{sph}^b(\calGr_G;\ff)\ar[r,"\bT_0"]&\mathrm{Sh}_{sph}(\calGr_G;\ff),
\end{tikzcd}
\]
where $\bT_0$ is the modular Tate extension of scalars functor from \cite{Tr}. Thus, we have a commutative diagram
\begin{equation*}
 \begin{tikzcd}
\mathrm{Par}^0_{sph}(\calGr_G;\bO)\ar[r,"\Psm\circ M"]\ar[d,"\ff"] &\mathrm{Par}^0_{sph}(\calGr_G;\T_\infty)\ar[d,"L"]\ar[dr,"\ff_{Tate}"]\\
\mathrm{Par}^0_{sph}(\calGr_G;\ff)\ar[r,"\underline{\Psm}"]&\mathrm{Par}^0_{sph}(\calGr_G;\ff)\ar[r,"\bT_0"]&\mathrm{Sh}_{sph}(\calGr_G;\ff).
\end{tikzcd}
\end{equation*}
For any $\bO$-parity sheaf $\F$, it follows that $L\Psm M(\F) = \underline{\Psm}(\ff(\F))$ is perverse. In particular, $ \epsilon_\ast\ff_{Tate}\Psm M(\F)\in D(\calGr_G;\ff)$ is a direct sum of even shifts of perverse sheaves. 

Now let $\F$ be an object of $\mathrm{Par}^0_{sph}(\calGr_G;\ff)$; we may realize $\F$ as the cocone of a map $\G\xrightarrow{p} \G$, where $\G$ is an object in $\mathrm{Par}^0_{sph}(\calGr_G;\bO)$ and the map is given by multiplication by $p$. Then $ \epsilon_\ast\Psm_0(\F)$ is the cocone of 
\[
 \epsilon_\ast\ff_{Tate}\Psm M(\G)\xrightarrow{p(=0)}  \epsilon_\ast\ff_{Tate}\Psm M(\G)
\]
in $D_{sph}(\calGr_G;\ff),$ where the map is now the zero map. Therefore,
\[
 \epsilon_\ast\Psm_0(\F)= \epsilon_\ast\ff_{Tate}\Psm M(\G)\oplus  \epsilon_\ast\ff_{Tate}\Psm M(\G)[1]
\]
is a periodic sum of all the shifts of $\underline{\Psm}\ff(\G)=\underline{\Psm}(\F).$ In particular, its degree zero perverse cohomology is just the perverse sheaf $\underline{\Psm}(\F).$
\end{proof}
With this proposition in mind, we adopt the notation $\underline{\Psm}:={}^pH^0_{Tate}\circ \Psm_{0}$.

\subsection{Relation to Frobenius contraction}\label{Section: Frobenius}

We now prove Theorem \ref{Thm: intro motivation}, which we restate for the convenience of the reader.
\begin{Thm}\label{Thm: main app}
The functor
\begin{align*}
\underline{\Psm}:\mathrm{Perv}_{sph}(\calGr_G;\ff)\to \mathrm{Perv}_{sph}(\calGr_G;\ff).
\end{align*}
corresponds to the Frobenius contraction functor under the geometric Satake equivalence.
\end{Thm}
\begin{proof}
This is accomplished in two steps: first we work on the additive subcategory of spherical parity sheaves and then give an extension to the full category of spherical perverse sheaves.

Recalling that $\F r^{(-1)}$ restricts to an endofunctor on $\mathrm{Tilt}(G^\vee_{\ff})$, we need to show that the diagram
\[
    \begin{tikzcd}
\mathrm{Par}^0_{sph}(\calGr_G;\ff) \ar[d] \ar[r,"\underline{\Psm}"]&\mathrm{Par}^0_{sph}(\calGr_G;\ff)\ar[d]\\ 
\mathrm{Tilt}(G^\vee_{\ff})  \ar[r,"\F r^{(-1)}"]&\mathrm{Tilt}(G^\vee_{\ff})   
\end{tikzcd}
\]
commutes, where the vertical arrows are the equivalence induced by the geometric Satake equivalence. Indeed, this gives the top face of the cube 
\[
 \begin{tikzcd}[row sep=1.5em, column sep = 1.5em]
  \mathrm{Par}^0_{sph}(\calGr_G;\ff)   \arrow[rr] \arrow[dr, swap] \arrow[dd,swap] &&
    \mathrm{Par}^0_{sph}(\calGr_G;\ff)\arrow[dd] \arrow[dr] \\
    & \mathrm{Tilt}(G^\vee_{\ff}) \arrow[rr] \ar[dd]&&
   \mathrm{Tilt}(G^\vee_{\ff}) \arrow[dd] \\
  \mathrm{Par}^n_{sph}(\calGr_T;\ff)  \arrow[rr] \arrow[dr] && \mathrm{Par}^n_{sph}(\calGr_T;\ff) \arrow[dr] \\
   &\mathrm{Tilt}(T^\vee_{\ff})   \arrow[rr] &&\mathrm{Tilt}(T^\vee_{\ff}),  
    \end{tikzcd}
\]
the bottom face of which is the diagram
\[
    \begin{tikzcd}
\mathrm{Par}^n_{sph}(\calGr_T;\ff) \ar[d] \ar[r,"\underline{\Psm}"]&\mathrm{Par}^n_{sph}(\calGr_T;\ff)\ar[d]\\ 
\mathrm{Tilt}(T^\vee_{\ff})  \ar[r,"\F r^{(-1)}"]&\mathrm{Tilt}(T^\vee_{\ff}) ,  
\end{tikzcd}
\]
whose vertical arrows simply forget the grading. Here $n$ denotes the normalization of indecomposable parity complexes on $\calGr_T$ inherited from $\calGr_G$, so that 
\begin{align*}
\mathrm{Par}^n_{sph}(\calGr_T;\ff)=\bigoplus_{\lambda\in\mathbb{X}_\bullet(T)}\mathrm{Vect}_{\ff}[\langle 2\rho,\lambda\rangle ].
\end{align*}

The action of $\vp$ is trivial on $Gr_T$, so that the functor $\underline{\Psm}$ sends $V_\lambda[\langle 2\rho,\lambda\rangle ]$ to $V_{\lambda/p}[\langle 2\rho,\lambda/p\rangle ]$ if $p$ divides $\lambda$, and $0$ otherwise. Thus, the bottom face is commutative. The two side faces of the cube are also commutative, as they are just (\ref{diagram: restriction}) stating that the geometric Satake equivalence intertwines the functors $Res^{G^\vee}_{T^\vee}$ and $R^G_T$. The back face of the cube is commutative since $\underline{\Psm}$ commutes with hyperbolic localization with respect to a $\mathbb{G}_m$-action which commutes with the $\vp$ action; this is an easy exercise. 

The front face of the cube commutes by the very definition of $\F r^{(-1)}$. The two vertical maps on this front face are both the restriction functor $Res^{G^\vee}_{T^\vee}$, which is faithful and injective on objects (since its source category is the category of tilting modules, which are determined by their characters). It is a formal consequence that the top face is commutative, as required.

We now deduce the result for the extension
\[
\underline{\Psm}={}^PH_{Tate}\circ\Psm_0:\Perv_{sph}(\calGr_G,\ff)\lra\Perv_{sph}(\calGr_G,\ff).
\] 
For the moment, let $C:\mathrm{Rep}(G^\vee_{\ff})\lra \mathrm{Rep}(G^\vee_{\ff})$ denote the functor corresponding to $\underline{\Psm}$.

Fixing a regular dominant cocharacter $\mu$ of $T$, recall that hyperbolic localization with respect to $\mu$ commutes with functor $\Psm_0$. Using the t-exactness property of hyperbolic localization in this context (\cite{MV}), we see that hyperbolic localization intertwines $^pH^0_{Tate}$ with the functor
$$
 D_{sph}^b(\calGr_T, \T_0)\lra D_{sph}^b(\calGr_T,\ff)
$$
which over the point $\lambda$ of $\calGr_T$ sends a complex to its degree $\langle-2\rho,\lambda\rangle$ Tate cohomology vector space, placed in homological degree $\langle-2\rho,\lambda\rangle$.

The upshot is that, applying the geometric Satake equivalence, the induced endofunctor $C$ of $\mathrm{Rep}(G^\vee_{\ff})$ satisfies the two following conditions:
\begin{enumerate}
    \item On tilting modules, $C$ is the Frobenius contraction functor;
    \item On underlying representations of $T^\vee_{\ff}$, $C$ is the Frobenius contraction functor.
\end{enumerate}
In particular, point 2 shows that $C$ is exact. But an exact endofunctor of $\mathrm{Rep}(G^\vee_{\ff})$ which takes tilting modules to tilting modules is determined by what it does to tilting modules. Indeed, in general given an abelian category $A$, a full additive subcategory $B$ and an exact endofunctor $C'$ of $A$ which sends $B$ to itself, the natural functor
$$
K^b(B)\lra K^b(A)\lra D^b(A)
$$
fits into a commutative square
\[
    \begin{tikzcd}
   K^b(B)\ar[r]\ar[d,swap,"KC'|_{B}"] &D^b(A)\ar[d,"DC'"]\\
  K^b(B)\ar[r]&D^b(A).
    \end{tikzcd}
\]
Applying this in the case 
\[
A=\mathrm{Rep}(G^\vee_{\ff}),\quad B=Tilt(G^\vee_{\ff}), \quad C=C',
\]
the functor $K^b(B)\lra D^b(A)$ is an equivalence. Then, $C$ is determined by $DC$, which is determined by $KC|_B$, which is determined by $C|_B$ as required.

\end{proof}

\bibliography{bib}
\bibliographystyle{crelle}



\end{document}